\newenvironment{proof}[1][Proof]{\begin{trivlist}
\item[\hskip \labelsep {\bfseries #1}]}{\end{trivlist}}
\newenvironment{definition}[1][Definition]{\begin{trivlist}
\item[\hskip \labelsep {\bfseries #1}]}{\end{trivlist}}
\newtheorem{theorem}{Theorem}[section]
\newtheorem{lemma}[theorem]{Lemma}
\newtheorem{proposition}[theorem]{Proposition}
\newcommand{\qed}{\nobreak \ifvmode \relax \else
      \ifdim\lastskip<1.5em \hskip-\lastskip
      \hskip1.5em plus0em minus0.5em \fi \nobreak
      \vrule height0.75em width0.5em depth0.25em\fi}
\newcommand{\lp}{\left(}
\newcommand{\rp}{\right)}
\newcommand{\si}{\sigma}
\newcommand{\De}{\Delta}
\title{Knots with distinct primitive/primitive and primitive/Seifert representatives}
\author{Brandy Guntel}
\begin{document}
\bibliographystyle{plainnat}

\maketitle {}

\begin{abstract}
Berge introduced knots that are primitive/primitive with respect to the genus 2 Heegaard surface, $F$, in $S^3$; surgery on such knots at the surface slope yields a lens space. Later Dean described a similar class of knots that are primitive/Seifert with respect to $F$; surgery on these knots at the surface slope yields a Seifert fibered space. Here we construct a two-parameter family of knots that have distinct primitive/Seifert embeddings in $F$ with the same surface slope, as well as a family of torus knots that have a primitive/primitive representative and a primitive/Seifert representative with the same surface slope.
\end{abstract}

\section{Introduction}

Since every closed 3-manifold can be obtained from Dehn surgery on a link $L$ in
$S^3$, much effort is dedicated to understanding Dehn surgery on knots and links. Every knot can be embedded in a genus $g$ Heegaard surface in $S^3$. For example, the torus knots can be embedded in a genus 1 Heegaard surface in $S^3$.  In this paper, we will focus on knots that can be
embedded in the genus 2 Heegaard surface in $S^3$. In \citep{hill} and \citep{himu}, Hill and
Murasugi studied such knots, which they called double-torus knots. Two subclasses of the double-torus knots are the primitive/primitive knots and the primitive/Seifert knots, which arise in the study of exceptional Dehn surgery.

A theorem of Thurston tells us that at most finitely many surgeries on a
hyperbolic knot are non-hyperbolic. Since these non-hyperbolic surgeries are
uncommon, we refer to them as exceptional surgeries. In \citep{berge}, Berge
introduced the primitive/primitive knots and observed that they have lens space
surgeries. Later Dean (\citep{deansthesis}, \citep{dean}) introduced the primitive/Seifert
knots, a natural generalization of primitive/primitive knots, and noted that
surgery on such a knot at the surface slope is either a Seifert fibered space or a connected sum of
lens spaces. In \citep{mimo}, Miyazaki and Motegi showed that the
primitive/Seifert knots are mostly hyperbolic. Since Seifert fibered surgeries on hyperbolic knots are difficult to understand, the primitive/Seifert knots are particularly interesting to study.

A natural question is that of uniqueness: can a knot have more than one primitive/Seifert representative with the same surface slope? Here, in section \ref{pt1}, we give examples of a two-parameter infinite family of knots with distinct primitive/Seifert embeddings. We also ask a similar question: can a knot have two representatives, one primitive/Seifert and one primitive/primitive, with the same surface slope? In section \ref{pt2}, we discuss a family of torus knots that have this property. All the necessary definitions are found in section \ref{prelim}.

The author would like to thank her thesis advisor, Cameron Gordon, for many valuable conversations and suggestions, as well as his patience and encouragement. The author would also like to thank John Berge for helpful suggestions. This work is partially supported by NSF RTG Grant DMS-0636643.

\section{Preliminaries}\label{prelim}

\subsection{Primitive and Seifert knots}

We begin by letting $K$ be a simple closed curve in the genus 2 surface $F$,
which bounds a genus 2 handlebody $H$, and we can consider the space obtained
by adding a 2-handle, homeomorphic to $D^2\times I$, to $H$ along $K$. This is
done by identifying $\partial D^2 \times I$ with an annulus neighborhood of $K$
in $F$, a process called 2-handle addition. The 2-handle addition may result in
several types of spaces. We name two of them here. 

\begin{definition}
$K$ is called \textit{primitive with respect to $H$} if adding a 2-handle to $H$
 along $K$ yields a solid torus.
\end{definition}

\begin{definition}
$K$ is called \textit{Seifert with respect to $H$} if adding a 2-handle to $H$
along $K$ yields a Seifert fibered space.
\end{definition}

If $K$ is Seifert, a lemma of Dean \citep{dean} and Eudave-Mu\~{n}oz \citep{em} 
tells us which Seifert fibered spaces occur.

\begin{lemma} \label{2HA}
If $K$ is Seifert with respect to $H$, then the manifold obtained by adding a
2-handle to $H$ along $K$ is Seifert fibered over the disk with at most two
exceptional fibers with multiplicities $a_1$ and $a_2$ or over the M\"obius band
with at most one exceptional fiber of multiplicity $b$. In the first case, $K$
is primitive if and only if $a_1$ or $a_2$ is 1.
\end{lemma}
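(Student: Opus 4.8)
Write $M = H \cup (D^2\times I)$ for the result of the $2$-handle addition, where $\partial D^2 \times I$ is glued to an annular neighbourhood of $K$ in $F$. Since $M$ is assumed Seifert fibered, $K$ is essential in $F$, so $\partial M$ --- which is obtained from $F$ by surgery along $K$ --- has no sphere component; an Euler characteristic count gives $\chi(\partial M) = \chi(F)+2 = 0$, so $\partial M$ is a single torus when $K$ is non-separating in $F$ and a pair of tori when $K$ separates $F$. The one structural fact I would lean on is that attaching the $2$-handle to a collar $F\times I$ produces a compression body $W$ with $\partial_+ W = F$ and $\partial_- W = \partial M$, so that $M = H\cup_F W$ is a genus $2$ Heegaard splitting of $M$ one of whose sides is the genuine handlebody $H$. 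In particular $\pi_1(M)$ is generated by $2$ elements.

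Granting this, the plan is to invoke the classification of Seifert fibered spaces with non-empty boundary and fundamental group of rank at most $2$ (in the spirit of the work of Boileau--Zieschang and Boileau--Otal). This restricts the base orbifold to one of: the disk with at most two cone points; the M\"obius band with at most one cone point; or the annulus with no cone points. The annular case would give $M \cong T^2\times I$, but $T^2\times I$ is not a handlebody with a single $2$-handle attached --- its genus $2$ Heegaard splitting has a non-trivial compression body on each side --- so this case does not occur; in particular $K$ must be non-separating in $F$. What remains are exactly the two alternatives in the statement. I expect the substance of the argument to lie precisely here: quoting the correct form of the rank-$\le 2$ (equivalently, small Heegaard genus) classification for bounded Seifert fibered spaces, and clearing away the degenerate interval-bundle cases. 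Everything else is bookkeeping with Euler characteristics and compression bodies.

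For the final clause, recall that by definition $K$ is primitive with respect to $H$ precisely when $M$ is a solid torus. Suppose $M$ is Seifert fibered over the disk with two exceptional fibers of multiplicities $a_1$ and $a_2$. If, say, $a_1 = 1$, then that fiber is in fact regular, so $M$ is Seifert fibered over the disk with at most one exceptional fiber and is therefore a solid torus. Conversely, if $a_1, a_2 \ge 2$, then quotienting $\pi_1(M)$ by the central subgroup generated by a regular fiber gives $\mathbb{Z}/a_1 * \mathbb{Z}/a_2$, which is infinite and non-abelian; hence $\pi_1(M)$ is non-abelian and $M$ is not a solid torus. So in the disk case $K$ is primitive if and only if one of $a_1, a_2$ equals $1$. (In the M\"obius band case $\pi_1(M)$ is always non-abelian, so $M$ is never a solid torus and $K$ is never primitive, consistent with the dichotomy being stated only for the first case.)
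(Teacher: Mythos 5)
Two remarks before the substance: the paper itself offers no proof of Lemma \ref{2HA} --- it is quoted from Dean \citep{dean} and Eudave-Mu\~{n}oz \citep{em} --- so your argument can only be judged on its own terms, and on its own terms it has two genuine gaps, both located in the separating (two boundary tori) case. First, the ``rank $\le 2$ classification'' of bounded Seifert fibered spaces that you invoke is not correct as stated: a Seifert fibered space over the annulus with one exceptional fiber can perfectly well have $2$-generated fundamental group. For example the $(2,1)$-cable space, fibered over the annulus with one exceptional fiber of multiplicity $2$, has $\pi_1 \cong \langle x, q \mid [x, q^2]\rangle$, which has rank $2$; in general, killing the central fiber only bounds the number of cone points via the rank of $\mathbb{Z} \ast \mathbb{Z}/a_1 \ast \cdots$, so a rank count alone does not reduce the list to ``disk with $\le 2$, M\"obius with $\le 1$, annulus with $0$.'' Second, the step you use to discard $T^2 \times I$ is false: $T^2\times I$ \emph{is} obtained by attaching a single $2$-handle to a genus two handlebody. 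Take $H=\Sigma\times I$ with $\Sigma$ a once-punctured torus and $K=\partial\Sigma\times\{1/2\}\subset\partial H$, an essential separating curve; the $2$-handle caps off $\partial\Sigma$ level by level, and the two remaining pieces $\Sigma\times[0,\tfrac12-\epsilon]$ and $\Sigma\times[\tfrac12+\epsilon,1]$ are external collars attached along subsurfaces of the boundary, so $H[K]\cong T^2\times I$. In particular $T^2\times I$ does admit a genus two splitting with a genuine handlebody on one side, contrary to the claim you lean on; indeed the lemma as stated is simply not true for separating curves, so no argument that allows them can succeed.

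The repair is the hypothesis that is implicit in this paper's context and explicit in the original sources: $K$ is non-separating (a primitive curve is automatically non-separating, which is the situation in which the lemma is applied). Then $\partial H[K]$ is a single torus, which at a stroke eliminates every base with two boundary circles --- the annulus with any number of cone points, and with it both problematic cases above --- and your rank observation then does finish the proof honestly: $\pi_1(H[K])$ is a quotient of $\pi_1(H)\cong F_2$, its quotient by the (central) fiber is $\pi_1^{\mathrm{orb}}$ of the base, a free product of cyclic groups and a free group, and Grushko together with rank $\le 2$ (plus an $H_1$ count, which has rank $\ge 3$ for the punctured torus and punctured Klein bottle bases) forces the disk with at most two cone points or the M\"obius band with at most one. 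Your final paragraph, proving that in the disk case $K$ is primitive if and only if some $a_i=1$, is correct as written.
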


In this paper, we will only consider knots that are Seifert over the disk. If
$K$ is Seifert with respect to  $H$ over the disk with exceptional fibers of
multiplicity $a_1$ and $a_2$, we say $K$ is \textit{$\lp a_1, a_2 \rp$ Seifert
fibered over $D^2$} or simply \textit{$(a_1, a_2)$ Seifert}. 

Now consider $H$ to be a genus 2 handlebody in the Heegaard decomposition of
$S^3$ and call the other handlebody $H'$.  Then $F = \partial H = \partial H'$
and $K$ is a simple closed curve in the genus 2 Heegaard surface $F$ of $S^3$.
Primitive and Seifert with respect to $H'$ are defined in the same way as for $H$, so
we can define primitive/Seifert as follows.

\begin{definition}
The curve $K$ is called \textit{primitive/Seifert with respect to $F$} if it is
primitive with respect to $H$ and Seifert, but not primitive, with respect to $H'$.
\end{definition}

Up to now, we have considered curves on the genus 2 Heegaard surface $F$ of
$S^3$, but we can also think of these curves as knots in $S^3$. Since 2-handle
addition along $K$ on $H$ and $H'$ yields either two solid tori or a Seifert
fibered space and a solid torus, we can describe the manifolds obtained by surgery
on the knot at the surface slope with respect to $F$ \citep{dean} \citep{em}, defined here.

\begin{definition}
Let $N$ denote a tubular neighborhood of $K$ in $S^3$. The \textit{surface slope of
$K$ with respect to $F$} is the isotopy class of $\partial N \cap F$ in
$\partial N$.
\end{definition}

\begin{proposition}\label{primSeifsurgtypes}
If a knot $K$ in $S^3$ is primitive/Seifert with respect to the genus 2 Heegaard
surface $F$, then Dehn surgery on $K$ at the surface slope yields one of the following:
\begin{enumerate}
\item[a.] A Seifert fibered space of the form $S^2\lp a_1, a_2, a_3 \rp$
\item[b.] A Seifert fibered space of the form $\mathbb{RP}^2 \lp b_1, b_2 \rp$
\item[c.] A connected sum of two lens spaces
\end{enumerate}
\end{proposition}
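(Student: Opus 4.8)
The plan is to rewrite the surface-slope surgery as a gluing of the two $2$-handle additions and then read off the trichotomy from the structure of a solid torus glued along a torus to a small Seifert fibered space. First I would set up the decomposition. Let $M$ be the result of surgery on $K$ at the surface slope. As already noted (following \citep{dean} and \citep{em}), $M = M_H \cup_T M_{H'}$, where $M_H$ (respectively $M_{H'}$) is obtained by adding a $2$-handle to $H$ (respectively $H'$) along $K$ and $T$ is the torus obtained from $F$ by surgery along $K$: the solid torus glued in meets $F$ in an annulus whose core is the surface slope, and two parallel meridian disks of that solid torus cut it into two $2$-handles, one attached to $H\setminus N$ and one to $H'\setminus N$, so that $T = \partial M_H = \partial M_{H'}$. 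Since $K$ is primitive with respect to $H$, $M_H$ is a solid torus. Since $K$ is Seifert with respect to $H'$, Lemma~\ref{2HA} applies to $M_{H'}$; I would treat the disk case, with at most two exceptional fibers of multiplicities $a_1, a_2$, and the M\"obius band case in parallel. Because $K$ is \emph{not} primitive with respect to $H'$, the last sentence of Lemma~\ref{2HA} forces $a_1, a_2 \ge 2$; in particular $M_{H'}$ is not a solid torus, so its Seifert fibration, and hence the fiber slope $\phi$ on $T$, is well defined up to isotopy.

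Since $M_H$ is a solid torus, $M$ is the Dehn filling $M_{H'}(\mu)$ of $M_{H'}$ along the slope $\mu \subset T$ bounding a meridian disk of $M_H$, and I would split into cases on $\Delta(\mu,\phi)$. If $\Delta(\mu,\phi) \ge 1$, the Seifert fibration of $M_{H'}$ extends over $M_H$, which becomes a fibered solid torus whose core is an exceptional fiber of multiplicity $\Delta(\mu,\phi)$ (a regular fiber when the distance is $1$); then $M$ is Seifert fibered over the $2$-sphere obtained by capping the base disk with a disk, with at most three exceptional fibers, which is conclusion (a) — or conclusion (b) with $\mathbb{RP}^2$ in place of $S^2$ in the M\"obius band case. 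If instead $\Delta(\mu,\phi) = 0$, so $\mu = \phi$, then $M$ is the filling of $M_{H'}$ along its fiber slope; here I would take the vertical annulus $\mathcal{A}$ that is the preimage of a properly embedded arc in the base disk separating the two cone points. Its boundary consists of two fibers on $T$, each of which bounds a meridian disk of $M_H$ after the filling, so $\mathcal{A}$ caps off to a $2$-sphere $S \subset M$. Since the arc separates the base disk, $S$ separates $M$ into two pieces $P_1, P_2$, where each $P_i$ is a Dehn filling along the fiber slope of a fibered solid torus with exceptional-fiber multiplicity $a_i$, hence a lens space $L(a_i, q_i)$, nontrivial because $a_i \ge 2$. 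Thus $M$ is a connected sum of two nontrivial lens spaces, which is conclusion (c).

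The step I expect to be the main obstacle is the reducible case: identifying the two complementary pieces of $S$ as fillings of fibered solid tori, recognizing these as lens spaces, and in particular ruling out $S^3$ (and $S^1 \times S^2$) summands. This is exactly where the hypothesis that $K$ is \emph{not} primitive with respect to $H'$ does real work, since through Lemma~\ref{2HA} it forces the exceptional multiplicities to be at least $2$, so that the two summands are genuine lens spaces and there really are two of them. The remaining points — that the Seifert fibration extends across the filling when $\Delta(\mu,\phi) \ge 1$ with at most one new exceptional fiber created, and that the M\"obius band subcases run in parallel (with the twisted $I$-bundle over the Klein bottle playing the role of $M_{H'}$ in the reducible case) — are routine once this is in place.
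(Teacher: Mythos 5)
The paper itself gives no proof of this proposition; it is quoted from Dean \citep{dean} and Eudave-Mu\~{n}oz \citep{em}. Your overall strategy is the standard one behind those references: write the surface-slope surgery as $M = M_H \cup_T M_{H'}$ with $M_H$ a solid torus and $M_{H'}$ Seifert fibered as in Lemma \ref{2HA}, use non-primitivity to force $a_1, a_2 \ge 2$ in the disk case, and split on the distance between the filling slope and the fiber slope. The disk-base analysis you give (fibration extends when $\Delta(\mu,\phi)\ge 1$; vertical annulus caps to a separating sphere and yields $L(a_1,\cdot)\# L(a_2,\cdot)$ when $\mu=\phi$) is correct and is exactly where you said the work is.

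The genuine gap is the M\"obius-band subcase with $\mu=\phi$, which you dismiss as ``routine'' and ``parallel'': it is not parallel, and your suggested treatment would produce a conclusion that is not on the list. An essential arc in the M\"obius band either fails to separate (so the capped-off vertical annulus is a \emph{non-separating} sphere in $M$), or it separates the base into a disk containing the cone point and a smaller M\"obius band, whose vertical preimage is the twisted $I$-bundle over the Klein bottle; capping that piece along the fiber slope of its M\"obius-band fibration gives a punctured $S^1\times S^2$ (its fundamental group becomes $\mathbb{Z}$), not a punctured lens space. Either way the fiber-slope filling of an SFS over the M\"obius band with one exceptional fiber of multiplicity $b$ is $S^1\times S^2 \,\#\, L(b,\cdot)$, which is none of (a), (b), (c). To finish one must invoke the hypothesis that $M$ comes from surgery on a knot in $S^3$: $H_1(M)$ is cyclic, while $H_1(S^1\times S^2\# L(b,\cdot)) \cong \mathbb{Z}\oplus\mathbb{Z}/b$ is not cyclic for $b\ge 2$, so this subcase cannot occur; and in the residual possibility with no exceptional fiber (surface slope $0$, $M = S^1\times S^2$) one needs Property R, or the observation that $S^1\times S^2$ is itself Seifert fibered over $S^2$, to place it under (a). Without some such $S^3$-specific input, the argument as written does not deliver the stated trichotomy.
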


\subsection{Twisted Torus Knots}

Let $T \lp p, q \rp $ denote the $\lp p, q \rp$-torus knot. We obtain from $T\lp
p,q \rp$ the \textit{twisted torus knot} $K\lp p, q, r, n \rp$ by twisting $r$
strands of $T\lp p,q \rp$ $n$ times. This new knot can be viewed as a curve on a
genus 2 Heegaard surface in $S^3$, which one can see in the following way.

\begin{figure}[h]
 \begin{center}
\includegraphics[scale=.3]{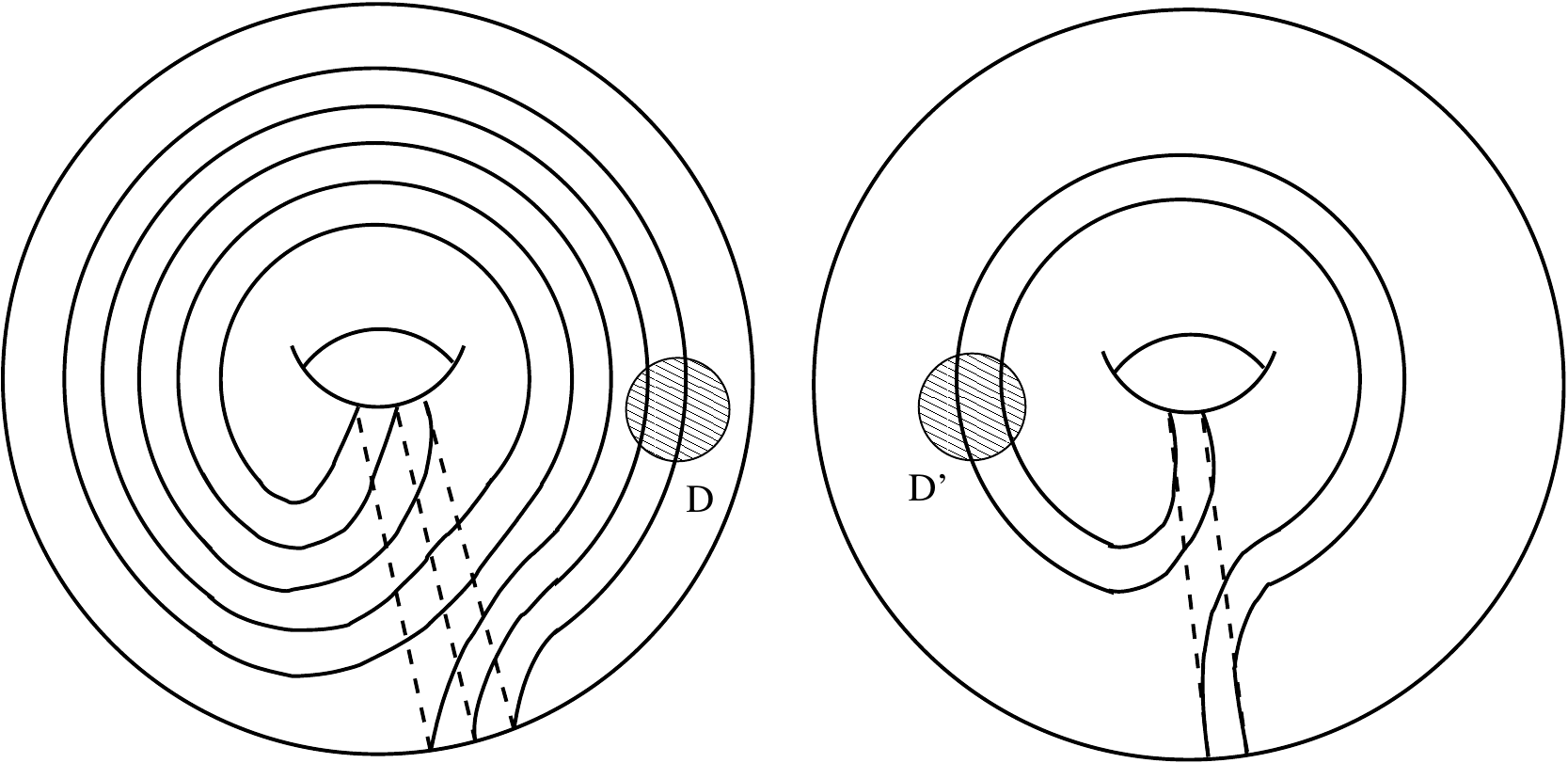} 
\caption{$K\lp 5, -3, 2, -1\rp$}\label{fig:ttk}
\end{center}
\end{figure}

Let $D$ be a disk on the torus so that $T\lp p,q \rp$ intersects $D$ in $r$
disjoint arcs, where $0\le r\le p+q$. We also consider a disk $D'$ on the torus
in which $r$ parallel copies of $T \lp 1, n \rp$, denoted $rT\lp1,n\rp$, lie so
that $D'$ intersects $rT\lp 1, n \rp$ in $r$ disjoint arcs, one in each
component of $rT\lp 1, n\rp$. Then we excise the disks $D$ and $D'$ from their
respective tori and glue the two tori together along the boundary of the disks
so that the orientations of the two torus links align correctly. Figure
\ref{fig:ttk} shows the example $K\lp 5, -3, 2, -1\rp$. 

Note that if $r=0$, the twisted torus knot $K\lp p, q, 0, n\rp$ lies on the
torus so that the disk $D$ is disjoint from the $\lp p, q \rp$-torus knot, so
that the knot only intersects one of the two punctured tori whose union is $F$. 

\begin{proposition}[Dean]\label{slope}
 The surface slope of $K\lp p,q, r, n \rp$ is $pq+nr^2$.
\end{proposition}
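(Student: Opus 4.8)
The plan is to compute the surface slope as a self-linking number. The curve $\partial N\cap F$ from the definition of surface slope is a copy $K'$ of $K$ pushed off within $F$, and its slope on $\partial N$, measured against the meridian of $K$, is exactly the integer $\mathrm{lk}(K,K')$. So it suffices to compute $\mathrm{lk}(K,K')$ for $K=K(p,q,r,n)$ and its surface push-off $K'$.

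First I would handle the untwisted case. When $n=0$ (or $r=0$), $K(p,q,r,0)$ is the torus knot $T(p,q)$ lying on a genus~2 surface that agrees with the standard Heegaard torus $T$ except that the $r$ strands meeting $D$ are re-routed through an untwisted, unlinked band; such a re-routing does not change the induced framing, so the surface slope equals the framing that $T$ puts on $T(p,q)$. That framing is $pq$: push $T(p,q)$ slightly into one of the solid tori $V$ bounded by $T$ to get a curve $c$ homologous in $V$ to $p$ copies of the core of $V$, and note that $T(p,q)$ is homologous to $q$ meridians of $V$ in the complementary solid torus, hence links the core of $V$ exactly $q$ times; therefore $\mathrm{lk}(T(p,q),c)=pq$. (This is the classical computation of the Heegaard-torus framing of a torus knot.)

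The main step is to see how the $n$ twists change $\mathrm{lk}(K,K')$. Since $K'$ is pushed off \emph{along $F$}, near $D$ the curve $K'$ runs as $r$ strands, one parallel to each of the $r$ strands of $K$; inserting $n$ full twists into the surface band at $D$ therefore inserts $n$ full twists into the block of $2r$ strands consisting of the $r$ strands of $K$ together with the $r$ strands of $K'$. In one positive full twist on $m$ strands every pair of strands acquires linking number $+1$; taking $m=2r$ and summing over just the mixed pairs, of which there are $r\cdot r=r^2$, shows that each full twist changes $\mathrm{lk}(K,K')$ by $+r^2$, while the new self-crossings of $K$ and of $K'$ contribute nothing to $\mathrm{lk}(K,K')$. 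Hence the twisting changes the surface slope by $nr^2$, and with the base case this yields surface slope $=pq+nr^2$.

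The part needing the most care is the local picture at the twist region: one must verify that the surface push-off $K'$ really runs through that region as $r$ strands parallel to the strands of $K$, so that the full twist acts on all $2r$ strands and not just the $r$ strands of $K$, and one must track orientations so that the sign in $nr^2$ comes out correctly for both signs of $n$. It is also worth recording that the part of $K$ lying on $T\setminus D$ and the twist region in $T'\setminus D'$ are separated by the gluing circle $\partial D=\partial D'$, so no extra $K$-$K'$ linking arises at the gluing beyond what the two pieces already contribute. Once this bookkeeping is set up, the count of $r^2$ mixed pairs per full twist is immediate and the proposition follows.
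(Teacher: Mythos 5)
Your argument is correct, but note that the paper itself offers no proof of this proposition: it is quoted from Dean (\citep{deansthesis}, \citep{dean}), so there is no in-paper argument to compare against. What you give is essentially the standard (and essentially Dean's) computation: the surface slope is the framing that $F$ induces on $K$, i.e.\ $\mathrm{lk}(K,K')$ for the push-off $K'$ of $K$ in $F$; the Heegaard-torus part contributes $pq$ and the twist region contributes $nr^2$. Your identification of the slope of $\partial N\cap F$ with $\mathrm{lk}(K,K')$ and the base-case computation $\mathrm{lk}(T(p,q),c)=pq$ are both fine. The two points you flag as needing care are indeed the real content: the $r$ strands through $D$ are parallel and \emph{coherently oriented} (this is built into the twisted torus knot construction), which is exactly what makes every mixed $K$--$K'$ pair contribute $+1$ per positive full twist and hence gives $r^2$ rather than a signed or binomial count; and $K'$, being pushed off along $F$, does run through the twist block as $r$ strands interleaved with those of $K$. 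One streamlining worth making: you can avoid the separate $n=0$ base case (and the slightly hand-wavy claim that re-routing over the handle does not change the framing) by computing the two regions' contributions directly for all $n$ at once — on the second punctured torus, $K\cup K'$ consists of $2r$ parallel $(1,n)$ curves, whose $r^2$ mixed pairwise linking numbers sum to $nr^2$, while the first punctured torus contributes $pq$ exactly as in the torus-knot computation; summing gives $pq+nr^2$ with the sign of $n$ handled automatically.
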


From Proposition \ref{primSeifsurgtypes}, when a twisted torus knot is
primitive/Seifert, $pq+nr^2$ surgery will result in one of the types of
manifolds listed there. It remains to discuss which twisted torus knots are
primitive/Seifert.

\subsection{Primitive and Seifert Twisted Torus Knots}

By considering the word for $K$ in $\pi_1 \lp H \rp$ and the algebraic
definitions of primitive and Seifert, Dean showed the following \citep{dean}.

\begin{proposition}\label{SFS}
For any integer $k$ with $1\le k < \frac{p}{q}$, $K\lp p, q, p-kq, n\rp$ is $\lp
k, p-kq\rp$ Seifert fibered over $D^2$.
\end{proposition}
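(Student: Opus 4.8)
The plan is to pass to fundamental groups, since whether $K$ is $(a_1,a_2)$ Seifert can be detected algebraically from the word of $K$ in $\pi_1(H)$ --- this is how Dean proceeds. Identify $\pi_1(H)$ with $F(x,y)$, the free group of rank $2$, taking $x$ to be carried by a core of the torus-knot solid torus of the genus-$2$ splitting and $y$ a core of the $1$-handle through which the $n$ twists run. Adding a $2$-handle to $H$ along a curve representing $w\in F(x,y)$ produces a manifold with fundamental group $\langle x,y\mid w\rangle$, and (the algebraic counterpart of Lemma \ref{2HA}) if $w$ becomes, after an automorphism of $F(x,y)$, conjugate to $c^{a_1}d^{a_2}$ for some basis $\{c,d\}$, then that manifold is the exterior of the torus knot $T(a_1,a_2)$, hence Seifert fibered over $D^2$ with exceptional fibers of multiplicities $a_1$ and $a_2$ --- a solid torus, so $K$ primitive, exactly when one of $a_1,a_2$ is $1$. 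It therefore suffices to exhibit such an automorphism for the word of $K(p,q,p-kq,n)$.

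First I would write down the word $w=w(p,q,r,n)\in F(x,y)$ of $K(p,q,r,n)$. Running along the knot, the torus-knot portion contributes powers of $x$, broken into $r$ syllables by the $r$ times the knot passes through the twisting handle, and each such passage contributes a power of $y$ coming from the $n$ full twists; thus $w$ is conjugate to $x^{a_1}y^{b_1}x^{a_2}y^{b_2}\cdots x^{a_r}y^{b_r}$, where $(a_1,\dots,a_r)$ is the stairstep (Christoffel-type) sequence of positive integers with $\sum_i a_i=p$ determined by how the $(p,q)$-curve meets the disk $D$, and the exponents $b_i$ record the twisting, with $\sum_i b_i=\pm nr$. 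Setting $n=0$ must recover the word of $T(p,q)$ itself, a useful check that pins down the combinatorics of the $a_i$; Proposition \ref{slope} gives another, since the framing read off from $w$ should be the surface slope $pq+nr^2$.

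The crux is the special feature of the value $r=p-kq$. I would show that for this $r$ the sequence $(a_i)$ is as uniform as the constraint $\sum_i a_i=p$ permits, and that $w$ can then be carried, by a sequence of Nielsen transformations of $F(x,y)$ --- realized geometrically by handle slides of the $1$-handle over the torus-knot solid torus and conversely, which is what absorbs the parameter $n$ --- to the two-syllable word $c^{\,k}d^{\,p-kq}$ in a new basis $\{c,d\}$. The hypothesis $1\le k<\tfrac{p}{q}$ enters precisely here: it is equivalent to $0<p-kq<p$, which at once guarantees that $K(p,q,p-kq,n)$ is a genuine twisted torus knot (a positive number of strands, fewer than $p$) and that both exponents $k$ and $p-kq$ are at least $1$, so the reduction lands on $T(k,p-kq)$ rather than a degenerate case with a trivial ``exceptional'' fiber.

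I expect the main obstacle to be that second step --- identifying $w$ precisely and then finding the right sequence of Nielsen moves --- since the bookkeeping of the Christoffel sequence $(a_i)$ and the verification that $w$ actually collapses to $c^{\,k}d^{\,p-kq}$ (rather than stalling at a longer word) is where the content lies; everything else is either setup or the structural facts about $2$-handle additions already quoted. A more geometric alternative would isotope $K(p,q,p-kq,n)$ inside $H$ to a standard position from which the Seifert fibration of the $2$-handle addition is visible directly; this sidesteps writing out $w$, but still rests on understanding why $r=p-kq$ is the distinguished value.
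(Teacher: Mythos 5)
First, a point of comparison: the paper never proves Proposition \ref{SFS} at all --- it is quoted from Dean \citep{deansthesis,dean}, with the remark that Dean argued ``by considering the word for $K$ in $\pi_1(H)$ and the algebraic definitions of primitive and Seifert.'' So your general strategy (compute the word of the twisted torus knot in $\pi_1(H)=F(x,y)$ and reduce it to a normal form) points in the same direction as the cited source. But what you have written is a plan, not a proof: you never actually compute the word of $K(p,q,p-kq,n)$, and the decisive claim --- that for $r=p-kq$ the word reduces, by explicit Nielsen moves, to $c^{k}d^{\,p-kq}$ --- is announced as ``I would show'' and deferred, with your own admission that ``the bookkeeping \dots is where the content lies.'' Moreover the ansatz you do write down is off: with $y$ the core of the handle the $r$ strands run over, each pass contributes $y^{\pm 1}$, not a power with total exponent $\pm nr$; the $H$-word is independent of $n$ (the $n$ twists are absorbed by Dehn twists of $H$ along the meridian disk of that handle, which is why Theorem \ref{primwhen} and Proposition \ref{SFS} contain no $n$, while the $H'$-side statements such as Theorem \ref{psttks} do require $n=\pm1$). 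The consistency checks you propose (recovering $T(p,q)$ at $n=0$, reading the surface slope of Proposition \ref{slope} off the word) would not behave as you expect, since the slope is a property of the curve on $F$, not of its class in $\pi_1(H)$.

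The second, more structural gap is the bridge from algebra back to geometry. You assert that if $w$ is carried by an automorphism of $F(x,y)$ to $c^{a_1}d^{a_2}$, then $H[K]$ is the exterior of $T(a_1,a_2)$, hence $(a_1,a_2)$ Seifert fibered over $D^2$. That implication is not a formality and is not supplied by Lemma \ref{2HA}, whose hypothesis is the geometric Seifert condition rather than a criterion for producing it. The manifold $H[K]$ is determined by the isotopy class of $K$ in $\partial H$, not merely by the ($\mathrm{Aut}$- or conjugacy-) class of its word in $\pi_1(H)$: an automorphism of $F_2$ is indeed induced by a homeomorphism of the handlebody, but that homeomorphism carries $K$ to \emph{some} curve representing $c^{a_1}d^{a_2}$, not necessarily to the standard one, so the conclusion about $H[K]$ does not follow from the normal form alone. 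Primitivity is the special case where algebra suffices ($\pi_1(H[K])\cong\mathbb{Z}$ together with Dehn's lemma forces a solid torus), which is presumably what suggested the analogy, but no analogous theorem for the Seifert case is quoted or proved in your sketch. Dean's argument supplies exactly this missing geometric input by putting the curve $K(p,q,p-kq,n)$ in an explicit position on $\partial H$ from which $H[K]$ is visibly a union of two fibered solid tori along an essential annulus with winding numbers $k$ and $p-kq$ --- the alternative you mention only in your final sentence. To become a proof, your proposal needs the actual word computation and reduction carried out, and either that explicit geometric picture or a proved lemma converting the algebraic normal form into the statement about $H[K]$.
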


\begin{theorem}\label{primwhen}
$K$ is primitive with respect to $H$ if and only if one of the following
conditions holds:
\begin{enumerate}
 \item[a.] $p=1$
 \item[b.] $r \equiv \pm1$ or $\pm q$ mod $p$. 
\end{enumerate}
\end{theorem}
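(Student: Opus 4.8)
The plan is to pass from the topological condition to group theory and then to a combinatorial statement about words in the free group of rank two. First I would recall Dean's description of $K=K\lp p,q,r,n\rp$ as a curve on $F=\partial H$ together with the word it determines in $\pi_1\lp H\rp\cong\langle x,y\rangle$, a free group of rank two: reading once around $K$ one records the generator $x$ at each of the $p$ passages through a meridian disk of the solid torus carrying the $\lp p,q\rp$--torus part, and the generator $y$ at each of the $r$ twisted strands, the cyclic positions of the $y$'s among the $x$'s being governed by the residues $jq\bmod p$ for $j=0,\dots,p-1$ (so, up to the standard relations among twisted torus knots, one may assume $0\le r\le p$). I would then record the reduction of the topological condition to an algebraic one: it is standard (and used throughout \citep{dean}, \citep{em}) that attaching a $2$--handle to a genus--$2$ handlebody along a boundary curve yields a solid torus if and only if the curve represents a primitive element of the fundamental group. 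The ``if'' half is clear from the resulting handle structure; for ``only if'' one uses that the $3$--manifold obtained is irreducible with torus boundary, so $\pi_1\cong\mathbb Z$ forces a solid torus, and $\langle x,y\mid w\rangle\cong\mathbb Z$ forces $w$ primitive in the free group. So it suffices to show the word $w$ is a primitive element of $F_2$ exactly when $p=1$ or $r\equiv\pm1,\pm q\pmod p$.

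For the ``if'' direction I would treat the cases separately, producing in each an automorphism of $F_2$ realised by a homeomorphism of $H$ (a composition of handle slides and disk twists) carrying $w$ to $x$, equivalently a second element completing $w$ to a free basis. When $p=1$ the lone occurrence of $x$ lets one solve for $x$, so $\{w,y\}$ is a basis. When $r\equiv-q\pmod p$ (for instance $r=p-q$) I would instead appeal to Proposition~\ref{SFS} and Lemma~\ref{2HA}: then $K$ is $\lp 1,r\rp$ Seifert fibered over $D^2$, and Lemma~\ref{2HA} says a Seifert knot with an exceptional multiplicity equal to $1$ is primitive. In the remaining cases $r\equiv\pm1$ or $r\equiv q\pmod p$ the cyclic word $w$ degenerates — the $y$'s occur either in a single block or singly in a balanced pattern — and a short sequence of Nielsen transformations either reduces $w$ to $x$ or exhibits $w$ as conjugate to a cyclically reduced word in which $y^{\pm1}$ appears exactly once, which is manifestly primitive. (Equivalently: in each of the four residue classes $w$ is, up to conjugacy, a cyclic Christoffel word in $x$ and $y$, hence primitive.)

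For the ``only if'' direction I must show that if $p\neq1$ and $r\not\equiv\pm1,\pm q\pmod p$ then $\langle x,y\mid w\rangle$ is not infinite cyclic. Whenever $r=p-kq$ with $k\ge2$ and $p-kq\ge2$ this is immediate from Proposition~\ref{SFS} and Lemma~\ref{2HA}: the $2$--handle addition is genuinely $\lp k,p-kq\rp$ Seifert fibered over $D^2$ with both multiplicities at least $2$, so its fundamental group surjects onto a group with torsion and cannot be $\mathbb Z$. For the residues not of this form I would argue directly with the word: its $y$--positions are the first $r$ multiples of $q^{-1}$ modulo $p$, so by the three--distance (Steinhaus) theorem the gaps between consecutive $y$'s take at most three values, and an induction following the continued fraction expansion of $q/p$ (the Stern--Brocot descent) shows the cyclic word $w$ is \emph{balanced} — conjugate to a Christoffel word — precisely in the four residue classes above; a positive, cyclically reduced, non--balanced word cannot be primitive in $F_2$, since its conjugacy class is not that of a Christoffel word. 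Concretely, in the non--balanced case a bounded--length subword already displays the obstruction, or one produces an epimorphism of $\langle x,y\mid w\rangle$ onto a nonabelian two--generator group.

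The step I expect to be the main obstacle is this ``only if'' analysis: non--primitivity has to be certified uniformly over all the remaining residues, and the only certificates available — a torsion or nonabelian quotient of $\langle x,y\mid w\rangle$, or a Whitehead--graph obstruction to $w$ lying in a basis — must be extracted from the fine combinatorics of where the $y$'s fall, which is exactly where the three--distance bookkeeping does the real work. A secondary but genuine nuisance is pinning down the word $w$ itself — the placement of the $y$'s, the intervention of $q$ versus $q^{-1}$, and the reductions justifying $0\le r\le p$ — since every later step is downstream of that.
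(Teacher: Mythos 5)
There is nothing in the paper to compare your argument against: Theorem \ref{primwhen} is stated without proof and is attributed to Dean (\citep{deansthesis}, \citep{dean}). That said, your outline does follow the same general route Dean takes: read off the word $w$ that $K(p,q,r,n)$ represents in $\pi_1(H)\cong F_2$ (a positive word in $x,y$, independent of $n$, with the $y$'s inserted according to the torus-knot strand pattern), reduce ``2-handle addition gives a solid torus'' to ``$w$ is a primitive element,'' and then decide primitivity combinatorially using the classification of primitive positive words in a rank-two free group (conjugates of Christoffel words, a real theorem of Osborne--Zieschang type that you would need to import, since it is not in this paper's toolkit).

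The genuine gap is that the entire content of the ``only if'' direction -- and hence of the theorem beyond the easy cases -- is compressed into the unproven assertion that the cyclic word is balanced exactly when $r\equiv\pm1$ or $\pm q\pmod p$, to be established by an unspecified ``three-distance / Stern--Brocot induction.'' You never actually pin down $w$: you flag the $q$ versus $q^{-1}$ ambiguity yourself, and a slip there changes the residue classes appearing in the statement, so the bookkeeping you defer is precisely where the theorem lives. The reduction ``one may assume $0\le r\le p$'' is also not a legitimate move here: the theorem is about the curve on $F$ for each $r$ in the range $0\le r\le p+q$ of the construction, and for $r>p$ the word contains adjacent $y$'s, so your description of the $y$-positions as the first $r$ multiples of $q^{\pm1}$ mod $p$ breaks down and the balancedness analysis must be redone. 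Finally, the certificates you do supply cover very little of the ``only if'' direction: Proposition \ref{SFS} together with Lemma \ref{2HA} rules out primitivity only for the finitely many integers $r=p-kq$ with $2\le k<p/q$, not for whole residue classes (and similarly the $k=1$ case gives primitivity only for $r=p-q$ itself, not for every $r\equiv -q$). As written, then, this is a plausible plan whose decisive step -- uniform non-primitivity for all remaining $r$ -- is identified but not carried out.
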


We can modify these statements slightly when we want to consider $H'$ rather
than $H$. For example, the $\lp p,q\rp$-torus knot part of the twisted torus
knot is a $\lp q,p \rp$-torus knot on the boundary of $H'$. Then the requirement
$p=1$ in Theorem \ref{primwhen} becomes $q=1$ when we replace $H$ with $H'$ in
that theorem.

These statements lead to criteria on the parameters $p$, $q$, $r$ and $n$ for
twisted torus knots to be primitive/Seifert. By finding a regular fiber of the
Seifert fibered space that results from 2-handle addition along $K$ and using
homological arguments, Dean found the Seifert fibered space that results from
$pq+nr^2$ surgery on $K$ \citep{dean}.

\begin{theorem}\label{psttks}
 The twisted torus knots $K\lp p, q, r, n \rp$, with $r < \max \lbrace p,q
\rbrace$ and $n = \pm 1$, that are Seifert with respect to $H$ and primitive
with respect to $H'$ are $K\lp p,q,p-kq, n\rp$ with $1<q<\frac{p}{2}$ and $2\le
k\le \frac{p-2}{q}$. Furthermore, surgery at the surface slope for these knots
yield Seifert fibered spaces of the form $S^2 \lp k, p-kq, p- \lp k-n\rp q \rp$.

\end{theorem}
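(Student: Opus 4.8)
The plan is to prove the theorem in two stages: first the classification of which twisted torus knots with $r<\max\{p,q\}$ and $n=\pm1$ are Seifert (but not primitive) with respect to $H$ and primitive with respect to $H'$, and then the identification of the surface-slope surgery as $S^2(k,p-kq,p-(k-n)q)$.

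For the classification, I would start from Proposition~\ref{SFS}: for $1\le k<p/q$ the knot $K(p,q,p-kq,n)$ is $(k,p-kq)$ Seifert fibered over $D^2$, and, since we only consider knots that are Seifert over the disk, Dean's analysis of the word of $K$ in $\pi_1(H)$ (the same analysis underlying Theorem~\ref{primwhen}) shows these exhaust the twisted torus knots with $r<\max\{p,q\}$ and $n=\pm1$ that are Seifert with respect to $H$. Given such a knot, Lemma~\ref{2HA} says it is primitive with respect to $H$ exactly when one of $k$, $p-kq$ equals $1$; since both are positive, for $K$ to be primitive/Seifert rather than primitive/primitive we need neither to equal $1$, i.e. $k\ge2$ and $p-kq\ge2$, equivalently $2\le k\le\frac{p-2}{q}$, and the existence of an integer in this range forces $q<\frac{p}{2}$ (we also take $q>1$, since $q=1$ makes the torus-knot part trivial). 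It remains to check primitivity with respect to $H'$: the $(p,q)$-torus-knot part of $K$ is a $(q,p)$-torus knot on $\partial H'$, so the $H'$-version of Theorem~\ref{primwhen} says $K$ is primitive with respect to $H'$ precisely when $q=1$ or $r\equiv\pm1$ or $\pm p \pmod q$; since $r=p-kq\equiv p\pmod q$ this holds automatically. This pins down exactly the stated family.

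For the surgery statement, I would use the standard fact that surgery on $K\subset F$ at the surface slope is homeomorphic to $H_K\cup_{T^2}H'_K$, the union along the surgered surface $F'$ of the two $2$-handle additions (to $H$ and to $H'$) along $K$. By the classification, $H_K$ is Seifert fibered over $D^2$ with exceptional fibers of multiplicities $k$ and $p-kq$, while $H'_K$ is a solid torus (as $K$ is primitive with respect to $H'$). Gluing a solid torus onto a Seifert fibered space over $D^2$ with two exceptional fibers yields a Seifert fibered space over $S^2$ in which the core of the glued-in solid torus is a third fiber whose multiplicity is the geometric intersection number on $F'$ of a regular fiber of $H_K$ with the meridian of $H'_K$; in particular the result is of the form $S^2(k,p-kq,m)$. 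The task is then to compute $m$: locate a regular fiber of the Seifert fibration of $H_K$ explicitly on $F'$ (following Dean's construction), identify the meridian slope of $H'_K$, express both in a convenient basis for $H_1(F')$ coming from the genus-$2$ splitting, and evaluate the intersection number, using Proposition~\ref{slope} to fix the surface slope $pq+nr^2$ relative to this basis; this should come out to $m=p-(k-n)q$. (Alternatively one can realize $K$ by $-1/n$ surgery on an unknot encircling $r$ strands of $T(p,q)$ and run the surgery calculus, but the regular-fiber approach is the more natural one here.)

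The main obstacle is precisely this last homological computation, and within it the correct identification of the regular fiber of the Seifert fibration on $H_K$; everything else is bookkeeping with Proposition~\ref{SFS}, Lemma~\ref{2HA}, and Theorem~\ref{primwhen}. Two secondary points also need care: justifying the converse to Proposition~\ref{SFS} — that no $r$ with $r\not\equiv p\pmod q$, and no $r$ outside the listed range, yields a Seifert knot under the stated hypotheses — and checking the degenerate values of $m$: $m=0$ would give a connected sum of lens spaces (case~(c) of Proposition~\ref{primSeifsurgtypes}) and $m=1$ would collapse a fiber, so one must verify that neither occurs for $2\le k\le\frac{p-2}{q}$ and $n=\pm1$. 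Here coprimality $\gcd(p,q)=1$ prevents $(k-n)q=p$, so $m\neq0$, and the bounds on $k$ together with $\gcd(p,q)=1$ handle the remaining degenerate case, so the surgered manifold is genuinely of the form $S^2(k,p-kq,p-(k-n)q)$.
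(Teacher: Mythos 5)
The first thing to note is that the paper itself gives no proof of Theorem~\ref{psttks}: it is quoted from Dean \citep{dean}, with the method summarized in one sentence (find a regular fiber of the Seifert fibered space obtained by 2-handle addition along $K$, then argue homologically). Your outline follows exactly that route, and the bookkeeping parts are correct: Proposition~\ref{SFS} gives that $K(p,q,p-kq,n)$ is $(k,p-kq)$ Seifert over $D^2$; Lemma~\ref{2HA} shows that excluding primitivity with respect to $H$ forces $k\ge 2$ and $p-kq\ge 2$, i.e.\ $2\le k\le\frac{p-2}{q}$ and hence $1<q<\frac{p}{2}$; and the $H'$-version of Theorem~\ref{primwhen} (with $p$ and $q$ interchanged, so the condition is $q=1$ or $r\equiv\pm1,\pm p \bmod q$) is automatically satisfied since $r=p-kq\equiv p\bmod q$. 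However, as a proof the two substantive steps are not carried out. The completeness of the classification --- that no other $r<\max\{p,q\}$ with $n=\pm1$ yields a curve that is Seifert with respect to $H$ --- is simply deferred to ``Dean's analysis,'' which is a citation, not an argument. More importantly, the identification of the third Seifert invariant as $p-(k-n)q$ is only announced (``this should come out to $m=p-(k-n)q$''); locating the regular fiber on the surgered surface and evaluating the relevant intersection/homology data is precisely the content of the ``Furthermore'' clause, and without it the distinctive part of the theorem is not established. So the plan is the right one (it is Dean's), but there is a genuine gap between the plan and a proof.

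One concrete error in your closing remarks: you claim that the bounds $2\le k\le\frac{p-2}{q}$ together with $\gcd(p,q)=1$ rule out the degenerate value $\pm1$ for the third parameter. They do not. Take $p=(k+1)q-1$ and $n=-1$: then $r=p-kq=q-1\ge2$, all hypotheses of the theorem hold, and $p-(k-n)q=p-(k+1)q=-1$. Indeed Section~\ref{pt2} of the paper relies on exactly these knots, whose surface-slope surgery is the lens space with Seifert fibering $S^2(k,q-1,-1)$. This does not contradict the theorem --- a lens space is still a Seifert fibered space of the stated form --- but the verification you propose is false as stated; only the value $0$ is excluded by $\gcd(p,q)=1$, and the possibility of a $\pm1$ parameter (hence a lens space) must simply be accepted rather than ruled out.
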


From this theorem, we see that the curves $K\lp 17, 5, 2, -1 \rp$ and $K\lp 18,
5, 3, -1 \rp$ have the same surface slope with respect to the genus 2 Heegaard
surface in $S^3$, namely 81. They also yield the same Seifert fibered space after
surgery at the surface slope: $S^2 \lp 2, 3, 5 \rp$. (It is enough here to give
the multiplicities of the exceptional fibers because we know the surgery slope, and
hence the homology of the space.) On the other hand, Proposition \ref{SFS} 
shows that $K\lp 17, 5, 2, -1 \rp $ is $\lp 2, 5\rp$ Seifert fibered over $D^2$
with respect to $H$, whereas $K\lp 18, 5, 3, -1 \rp$ is $\lp 3, 5\rp$ Seifert
fibered over $D^2$ with respect to $H$. This tells us that there is no
homeomorphism of $S^3$ that preserves the Heegaard splitting and sends one curve
to the other. It is natural to ask whether these curves are the same as knots
in $S^3$. In this case, they are isotopic in $S^3$, which we can see using
conjugacy of elements in the braid group. This example provides insight for a
more general statement.

\section{Distinct Primitive/Seifert Embeddings}\label{pt1}

\begin{theorem}\label{T1}
Let the curves $K_1$ and $K_2$ in the genus 2 Heegaard surface $F$ be the
twisted torus knots $K\lp kq +\frac{q-1}{2},q, \frac{q-1}{2}, -1\rp$ and $K\lp
kq+\frac{q+1}{2},q, \frac{q+1}{2}, -1 \rp$, respectively, where $q\ge 5 $ is odd
and $k\ge 2$. Then $K_1$ and $K_2$ are isotopic as knots in $S^3$ and have the same surface slope with respect to $F$, but there is
no homeomorphism of $S^3$ sending the pair $\lp F, K_1\rp$ to $\lp F, K_2\rp$.
\end{theorem}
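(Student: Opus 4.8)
The plan is to prove the three claims of Theorem~\ref{T1} separately, drawing on the machinery already assembled. First I would verify the numerical hypotheses needed to invoke the earlier propositions: writing $p_1 = kq + \frac{q-1}{2}$, $r_1 = \frac{q-1}{2}$ and $p_2 = kq + \frac{q+1}{2}$, $r_2 = \frac{q+1}{2}$, one checks that $r_i = p_i - kq$, so Proposition~\ref{SFS} applies and $K_1$ is $(k, \frac{q-1}{2})$ Seifert fibered over $D^2$ with respect to $H$ while $K_2$ is $(k, \frac{q+1}{2})$ Seifert fibered over $D^2$ with respect to $H$. One also checks $1 < q < \frac{p_i}{2}$ and $2 \le k \le \frac{p_i - 2}{q}$ (the upper bound on $k$ is automatic here since $p_i - 2 = kq + \frac{q\mp1}{2} - 2 \ge kq$ for $q \ge 5$), so Theorem~\ref{psttks} applies: both $K_1$ and $K_2$ are primitive/Seifert, and we record the Seifert fibered spaces they produce.

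Second, for the surface-slope claim I would simply compute $pq + nr^2$ from Proposition~\ref{slope} for each knot with $n = -1$. For $K_1$ this is $\left(kq + \frac{q-1}{2}\right)q - \left(\frac{q-1}{2}\right)^2$, and for $K_2$ it is $\left(kq + \frac{q+1}{2}\right)q - \left(\frac{q+1}{2}\right)^2$; expanding both and using $\left(\frac{q+1}{2}\right)^2 - \left(\frac{q-1}{2}\right)^2 = q$, the two expressions coincide (both equal $kq^2 + \frac{q^2-2q+1}{4}$ after simplification), giving the common surface slope. Third, for the non-existence of a Heegaard-splitting-preserving homeomorphism, I would argue as in the worked example $K(17,5,2,-1)$ versus $K(18,5,3,-1)$: such a homeomorphism would have to send the pair $(H, K_1)$ to $(H, K_2)$ (or possibly to $(H', K_2)$), hence would carry the 2-handle addition along $K_1$ to that along $K_2$, forcing the resulting Seifert fibered spaces over $D^2$ to be homeomorphic \emph{with matching exceptional multiplicities}. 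But $K_1$ gives multiplicities $\{k, \frac{q-1}{2}\}$ and $K_2$ gives $\{k, \frac{q+1}{2}\}$, and since $\frac{q-1}{2} \ne \frac{q+1}{2}$ and neither equals $k$ in general (one should check the small exceptional cases, e.g. $k = \frac{q\pm1}{2}$, separately, using the surgery-slope data to pin down homology), these sets differ, a contradiction. One must also rule out the possibility that the homeomorphism swaps the two handlebodies; here the $(p,q)$-torus-knot part becomes a $(q,p)$-torus knot on $\partial H'$, and a parameter/genus comparison (or the structure forced by Theorem~\ref{primwhen} and Lemma~\ref{2HA}, since $K_i$ is primitive but not Seifert-non-primitive with respect to $H'$) shows the roles of $H$ and $H'$ cannot be interchanged.

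The main obstacle is the first claim: that $K_1$ and $K_2$ are isotopic \emph{as knots in $S^3$} despite sitting differently on $F$. Following the template of the $K(17,5,2,-1)$/$K(18,5,3,-1)$ example, I expect to realize each twisted torus knot as the closure of an explicit braid and then exhibit a sequence of conjugations (and Markov-type moves, or better, conjugations alone within a fixed braid group on $p_1$ or a related number of strands) carrying one braid word to the other. Concretely, a twisted torus knot $K(p,q,r,n)$ is the closure of the braid $(\sigma_1 \sigma_2 \cdots \sigma_{p-1})^q (\sigma_1 \cdots \sigma_{r-1})^{rn}$ in $B_p$; the two words here differ by shifting one strand between the ``torus'' block and the ``twist'' block, and the content of the first claim is that this shift is absorbed by conjugation. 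I would set up the general conjugacy — likely by cyclically rotating the full-twist factor past part of the torus braid, exploiting the relation $(\sigma_1 \cdots \sigma_{p-1})^p$ is central and that $q$ and the $r_i$ are tied together by $r_i = p_i - kq$ — and verify that it identifies the two closures; this computation, though elementary, is the technical heart and is where I would spend most of the effort, checking carefully that no stabilization changes the knot type and that the cases $q = 5$ and small $k$ behave as expected.
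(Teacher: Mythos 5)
The routine parts of your proposal coincide with the paper's proof: the slope computation via Proposition \ref{slope} (note a small arithmetic slip --- the common slope is $kq^2+\frac{q^2-1}{4}$, not $kq^2+\frac{q^2-2q+1}{4}$, though your difference identity $\bigl(\frac{q+1}{2}\bigr)^2-\bigl(\frac{q-1}{2}\bigr)^2=q$ is what matters and is correct), and the non-existence of a homeomorphism of pairs, which in the paper also comes down to: $h$ must carry $H$ to $H$ (the swap $H\mapsto H'$ is impossible since $H[K_1]$ is a Seifert fibered space with two exceptional fibers of multiplicity $\geq 2$ while $H'[K_2]$ is a solid torus), and then $h$ would give a homeomorphism $D^2\bigl(k,\frac{q-1}{2}\bigr)\cong D^2\bigl(k,\frac{q+1}{2}\bigr)$, which cannot exist. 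Your caution about exceptional cases $k=\frac{q\pm1}{2}$ is unnecessary: $\{k,a\}=\{k,b\}$ with $a\neq b$ forces $a=b=k$, so the multiplicity sets never match.

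The genuine gap is the first claim, that $K_1$ and $K_2$ are isotopic in $S^3$ --- which is precisely the technical heart of the theorem and of Section \ref{pt1}. You do not prove it; you only outline an intention ("I expect to\dots", "I would set up\dots") to find a braid conjugacy, without exhibiting a conjugating element or verifying any identity. Worse, the setup you propose is in $B_p$, where $K_1$ and $K_2$ are closures of braids on $p$ and $p+1$ strands respectively, so they do not live in a common braid group and conjugation alone cannot identify them; the stabilization/Markov issue you wave at is exactly what has to be controlled and is left open. The paper avoids this by working in $B_q$ (both knots close up from $q$-strand braids, using Dean's result that $K(p,q,r,-1)$ and $K(q,p,r,-1)$ are isotopic when $r<p,q$), then uses centrality of $(\sigma_{q-1}\cdots\sigma_1)^{q}$ to strip off the $k$-dependent full twists, reducing to the $k$-independent braids $\beta_1,\beta_2$, and finally produces the explicit conjugating element $\rev\Delta_{r-1}\,\rev\Delta_{2r}^{r+1}$ of Proposition \ref{P1}, verified through the braid identities of Lemmas \ref{L1}--\ref{L8}. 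Until you supply such an explicit conjugacy (or some other complete argument for the isotopy), Theorem \ref{T1} is not proved.
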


Here we consider $r$ to be $\frac{q-1}{2}$, so that $\frac{q+1}{2}$ is $r+1$, and
$p$ to be $kq +\frac{q-1}{2}$, so that $ kq+\frac{q+1}{2}$ is $p+1$. In
particular, we can express the parameters of the family either in terms of $q$
and $k$ or in terms of $p$ and $r$, with $p$ and $r$ being dependent on $q$ and
$k$. For notational ease, we will mostly use $p$ and $r$.

We think of the knots as closures of braids with $q$ strands, so we will carry out
calculations in the braid group $B_q$. We know that the $\left( p,q \right)
$-torus knot can be represented as a braid by $\left( \sigma_{q-1} \sigma_{q-2}
\cdots \sigma_1 \right) ^p$. Since the twisted torus knots are obtained from the
torus knot by twisting $r$ strands $n$ times, we can represent the twisted torus
knot $K \left( p, q, r, -1 \right) $ by $\left( \sigma_{q-1} \sigma_{q-2} \cdots
\sigma_1 \right) ^p \left(\sigma_1^{-1} \sigma_2^{-1}\cdots
\sigma_{r-1}^{-1}\right) ^r$ in $B_q$. If we can find an element $b\in B_q$ for
which $\left( \sigma_{q-1} \sigma_{q-2} \cdots \sigma_1 \right) ^p
\left(\sigma_1^{-1} \sigma_2^{-1}\cdots \sigma_{r-1}^{-1}\right) ^r$ and $b^{-1}
\left( \sigma_{q-1} \sigma_{q-2} \cdots \sigma_1 \right) ^{p+1}
\left(\sigma_1^{-1} \sigma_2^{-1}\cdots \sigma_{r}^{-1}\right) ^{r+1} b$ are
equal, then the knots $K \left(kq+ \frac{q-1}{2}, q, \frac{q-1}{2}, -1  \right)$
and $K \left(  kq+ \frac{q+1}{2}, q, \frac{q+1}{2}, -1 \right)$ are isotopic in
$S^3$. (Note: $\left( \sigma_{q-1} \sigma_{q-2} \cdots \sigma_1 \right) ^p
\left(\sigma_1^{-1} \sigma_2^{-1}\cdots \sigma_{r-1}^{-1}\right) ^r$ actually
represents $K\lp q,p, r,-1\rp$. In his thesis \citep{deansthesis}, Dean showed
that if $r<p$ and $r<q$, then $K\lp q,p, r,-1\rp$ and $K\lp p,q, r,-1\rp$ are
isotopic.)

Since $p=kq+\frac{q-1}{2}$, we can write this braid as \[ \left( \sigma_{q-1}
\sigma_{q-2} \cdots \sigma_1 \right) ^{kq+\frac{q-1}{2}} \left(\sigma_1^{-1}
\sigma_2^{-1}\cdots \sigma_{r-1}^{-1}\right) ^r. \] From braid theory
\citep{braidbook}, $\left( \sigma_{q-1} \sigma_{q-2} \cdots \sigma_1 \right) ^q
= \left( \sigma_1 \sigma_2 \cdots \sigma_{q-1} \right) ^q$. Since the right hand
side of this equality generates the center of the braid group $B_q$, the two
braids in question are conjugate when $\left( \sigma_{q-1} \sigma_{q-2} \cdots
\sigma_1 \right) ^{\frac{q-1}{2}} \left(\sigma_1^{-1} \sigma_2^{-1}\cdots
\sigma_{r-1}^{-1}\right) ^r$ and $\left( \sigma_{q-1} \sigma_{q-2} \cdots
\sigma_1 \right) ^{\frac{q+1}{2}} \left(\sigma_1^{-1} \sigma_2^{-1}\cdots
\sigma_{r}^{-1}\right) ^{r+1}$ are conjugate. We know that $r=\frac{q-1}{2}$, so
we can rewrite these braids as $\beta_1 = \left( \sigma_{2r} \sigma_{2r-1}
\cdots \sigma_1 \right) ^{r} \left(\sigma_1^{-1} \sigma_2^{-1}\cdots
\sigma_{r-1}^{-1}\right) ^r$ and $\beta_2 = \left( \sigma_{2r} \sigma_{2r-1}
\cdots \sigma_1 \right) ^{r+1} \left(\sigma_1^{-1} \sigma_2^{-1}\cdots
\sigma_{r}^{-1}\right) ^{r+1}$. Note that these braids are independent of $k$.

\begin{proposition}\label{P1}
$\beta_1$ and $\beta_2$ are conjugate by \[ \left( \sigma_1 \right) \left(
\sigma_2 \sigma_1 \right) \cdots \left( \sigma_{r-1} \sigma_{r-2} \cdots
\sigma_1 \right) \left( \sigma_{r+1} \right) \left( \sigma_{r+2} \sigma _{r+1}
\right) \cdots \left( \sigma_{2r} \sigma_{2r-1} \cdots \sigma_{r+1} \right). \]
\end{proposition}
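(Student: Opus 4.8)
The plan is to first put all three braids into a recognizable form. By the classical product formula $\Delta_m=(\sigma_1)(\sigma_2\sigma_1)\cdots(\sigma_{m-1}\sigma_{m-2}\cdots\sigma_1)$ for the positive half-twist, the displayed conjugator is $c=\Delta_L\Delta_R$, where $\Delta_L$ is the half-twist on the first $r$ strands (a word in $\sigma_1,\dots,\sigma_{r-1}$) and $\Delta_R$ is the half-twist on the last $r+1$ strands (a word in $\sigma_{r+1},\dots,\sigma_{2r}$); since these two generating sets are disjoint and non-adjacent, $\Delta_L$ and $\Delta_R$ commute. Likewise $(\sigma_1^{-1}\cdots\sigma_{r-1}^{-1})^r$ is the negative full twist $\Delta_L^{-2}$ on the first $r$ strands, and $(\sigma_1^{-1}\cdots\sigma_r^{-1})^{r+1}$ is the negative full twist $\Delta_{[1,r+1]}^{-2}$ on the first $r+1$ strands, so writing $\delta=\sigma_{2r}\sigma_{2r-1}\cdots\sigma_1$ we have $\beta_1=\delta^{\,r}\Delta_L^{-2}$ and $\beta_2=\delta^{\,r+1}\Delta_{[1,r+1]}^{-2}$.

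Next I would use the two standard facts about $\delta$ in $B_{2r+1}$: conjugation by $\delta$ shifts generator indices ($\delta^{-1}\sigma_i\delta=\sigma_{i+1}$ for $i\le 2r-1$, $\delta\sigma_i\delta^{-1}=\sigma_{i-1}$ for $i\ge 2$), and $\delta^{\,2r+1}$ is the full twist, hence central. Iterating the shift rule $r$ times gives $\delta^{\,r}\Delta_R\delta^{-r}=\Delta_{[1,r+1]}$ and $\delta^{-r}\Delta_L\delta^{\,r}=\Delta_R'$, the half-twist on strands $r+1,\dots,2r$; combining the first of these with centrality of $\delta^{\,2r+1}$ gives the clean form $\beta_2=\Delta_R^{-2}\delta^{\,r+1}$. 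Substituting into $c^{-1}\beta_1 c$, cancelling one factor of $\Delta_L$ against $\Delta_L^{-2}$, and using $[\Delta_L,\Delta_R]=1$, the desired relation $c^{-1}\beta_1 c=\beta_2$ collapses to the single identity \[ \Delta_L^{-1}\,\delta^{\,r}\,\Delta_L^{-1}\;=\;\Delta_R^{-1}\,\delta^{\,r+1}\,\Delta_R^{-1}, \] equivalently $\Delta_L\,\delta^{\,r+1}\,\Delta_L=\Delta_R\,\delta^{\,r}\,\Delta_R$.

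This last identity is the heart of the matter, and proving it is where I expect the real work to lie. My approach would be to split $\delta=(\sigma_{2r}\cdots\sigma_{r+1})(\sigma_r\cdots\sigma_1)$ into a ``second-block'' and a ``first-block'' staircase, record the decompositions $\Delta_{[1,r+1]}=\Delta_L(\sigma_r\cdots\sigma_1)$ and $\Delta_R=\Delta_R'(\sigma_{2r}\cdots\sigma_{r+1})$ together with the conjugation relations above, and then push the half-twists through the powers of $\delta$ via the shift rule, finishing with a short braid-relation computation; alternatively the identity can be attacked by induction on $r$, since $\beta_1$, $\beta_2$ and $c$ are built self-similarly as $r$ grows. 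The main obstacle is precisely this computation: the shift rule for $\delta$ interacts cleanly with $\Delta_L$ and with $\Delta_R$ separately, but the generator $\sigma_r$ sits on the seam between the first $r$ and the last $r+1$ strands and commutes with neither half-twist, so the bookkeeping at that seam — exactly where the two blocks meet — is the delicate step.
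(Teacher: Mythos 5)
Your preliminary reductions are correct, and they repackage the problem more cleanly than the paper does: writing the conjugator as $c=\Delta_L\Delta_R$, a product of commuting half-twists on strands $1,\dots,r$ and $r+1,\dots,2r+1$, recognizing $(\sigma_1^{-1}\cdots\sigma_{r-1}^{-1})^r$ and $(\sigma_1^{-1}\cdots\sigma_{r}^{-1})^{r+1}$ as negative full twists, and using the shift rule plus centrality of $\delta^{2r+1}$ to get $\beta_1=\delta^{r}\Delta_L^{-2}$ and $\beta_2=\Delta_R^{-2}\delta^{r+1}$ are all valid, and the resulting collapse of $c^{-1}\beta_1c=\beta_2$ to the single identity $\Delta_L^{-1}\delta^{r}\Delta_L^{-1}=\Delta_R^{-1}\delta^{r+1}\Delta_R^{-1}$ is algebraically sound.

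The problem is that you stop exactly there. The identity you correctly call the heart of the matter is never proved: you offer a plan (split $\delta$ into two staircase blocks, push half-twists through, or induct on $r$) and you yourself flag the bookkeeping at the seam generator $\sigma_r$ as the delicate, unresolved step. That residual identity is not a formal consequence of the tools you have already used --- the shift rule, centrality, and commutation of the two blocks were all spent in the reduction --- and it is precisely the part of the argument on which the paper expends its effort: after analogous reductions the paper arrives at equation \eqref{eq8} and verifies it by the explicit iterative computations of Lemmas \ref{L5}--\ref{L8} (proved by repeated use of the index-shift Lemmas \ref{L1} and \ref{L3}), for instance $\Delta_{r-1}(\Pi_{r-1})^{-r}=(\Delta_{r-1})^{-1}$ and $(\Pi_{2r})^{r+1}(\Delta_{2r}^{r+1})^{-1}=\Pi_{2r}\Pi_{2r-1}\cdots\Pi_{r}$. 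So while your framing is a genuinely nice alternative setup (and the missing identity is in fact true), the proposal has a real gap: to complete it you must carry out the block-by-block (or inductive) computation you only sketch, which in substance amounts to reproving analogues of Lemmas \ref{L5}--\ref{L8}.
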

 
 We will adopt the notation of Garside \citep{gars}, with some modification:
$\Pi_s^l = \sigma_l \sigma_{l+1} \cdots \sigma_s$ and $\Delta_s^l = \Pi_s^l
\Pi_{s-1}^l \cdots \Pi_l^l$. When $l=1$, we will leave off the superscript.
(Note: because the superscript may be confused with an exponent, any exponents
will occur outside of parentheses.) As in \citep{gars}, $\mbox{rev} w$ denotes the
word obtained by writing the elements of $w$ in the reverse order. This
notation makes the conjugating element in Proposition \ref{P1} easy to write as
$\mbox{rev}\Delta_{r-1} \mbox{rev} \Delta_{2r}^{r+1}$, and the braids $\beta_1$ and $\beta_2$
are also simplified to $ \left( \mbox{rev} \Pi_{2r} \right)^r  \left( \mbox{rev}
\Pi_{r-1} \right)^{-r}$ and  $ \left( \mbox{rev} \Pi_{2r} \right)^{r+1}  \left(
\mbox{rev} \Pi_{r} \right)^{-r-1}$.

Using this notation, we will prove several lemmas about braids that will be
helpful to prove Proposition \ref{P1}.

\begin{lemma}\label{L1}
For $l <  t \leq s$,  $\si_t  \Pi_s^l = \Pi_s^l \si_{t-1}$.
\end{lemma}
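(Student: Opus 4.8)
The identity $\sigma_t\,\Pi_s^l = \Pi_s^l\,\sigma_{t-1}$ for $l < t \le s$ is the standard "push a generator through a descending/ascending run" relation, and the natural way to establish it is by induction on the length of the run $\Pi_s^l = \sigma_l\sigma_{l+1}\cdots\sigma_s$, peeling generators off the left. The plan is to recall the braid relations in the form we need them: $\sigma_i\sigma_j = \sigma_j\sigma_i$ when $|i-j|\ge 2$ (far commutation), and $\sigma_i\sigma_{i+1}\sigma_i = \sigma_{i+1}\sigma_i\sigma_{i+1}$, which rearranges to the more convenient $\sigma_{i+1}\sigma_i\sigma_{i+1} = \sigma_i\sigma_{i+1}\sigma_i$ and hence $\sigma_{i+1}\,(\sigma_i\sigma_{i+1}) = (\sigma_i\sigma_{i+1})\,\sigma_i$, i.e. a single generator commuted past the two-term run $\sigma_i\sigma_{i+1}$ drops its index by one.

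First I would treat the base case. When $\Pi_s^l$ has length one, i.e. $s = l$, the hypothesis $l < t \le s$ is vacuous, so there is nothing to check; the first genuine case is $s = l+1$, where $\Pi_{l+1}^l = \sigma_l\sigma_{l+1}$ and $t$ must equal $l+1$, and the claim $\sigma_{l+1}\sigma_l\sigma_{l+1} = \sigma_l\sigma_{l+1}\sigma_l$ is exactly the braid relation noted above. For the inductive step, write $\Pi_s^l = \sigma_l\,\Pi_s^{l+1}$ and consider $\sigma_t\,\sigma_l\,\Pi_s^{l+1}$. If $t = l+1$, then $\sigma_t$ and $\sigma_l$ do not commute, but $\sigma_{l+1}\sigma_l\,\Pi_s^{l+1} = \sigma_{l+1}\sigma_l\sigma_{l+1}\,\Pi_s^{l+2} = \sigma_l\sigma_{l+1}\sigma_l\,\Pi_s^{l+2}$, and now $\sigma_l$ commutes past every generator in $\Pi_s^{l+2} = \sigma_{l+2}\cdots\sigma_s$ by far commutation, giving $\sigma_l\sigma_{l+1}\sigma_{l+2}\cdots\sigma_s\,\sigma_l = \Pi_s^l\,\sigma_l = \Pi_s^l\,\sigma_{t-1}$. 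If instead $t > l+1$, then $\sigma_t$ commutes with $\sigma_l$, so $\sigma_t\,\sigma_l\,\Pi_s^{l+1} = \sigma_l\,\sigma_t\,\Pi_s^{l+1}$, and applying the inductive hypothesis to the shorter run $\Pi_s^{l+1}$ (valid since $l+1 < t \le s$) turns this into $\sigma_l\,\Pi_s^{l+1}\,\sigma_{t-1} = \Pi_s^l\,\sigma_{t-1}$, completing the induction.

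I do not expect any real obstacle here: the only thing to be careful about is bookkeeping the index ranges so that the inductive hypothesis is applied only when $l+1 < t$, which is exactly the case left after the $t = l+1$ branch is handled separately, and the one spot where the braid relation (rather than far commutation) is used is isolated in that branch. An alternative, equally short route is induction on $s$ from the right: write $\Pi_s^l = \Pi_{s-1}^l\,\sigma_s$ and push $\sigma_t$ rightward, but peeling from the left as above keeps the single use of the braid relation cleanest, so that is the version I would write up.
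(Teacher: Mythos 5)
Your proof is correct and uses exactly the same ingredients as the paper's: far commutation of $\sigma_t$ past $\sigma_l,\dots,\sigma_{t-2}$, a single application of the braid relation $\sigma_t\sigma_{t-1}\sigma_t=\sigma_{t-1}\sigma_t\sigma_{t-1}$, and far commutation of $\sigma_{t-1}$ past $\sigma_{t+1},\dots,\sigma_s$. The paper simply writes this as one direct chain of equalities, while your induction on the length of the run formalizes the same computation, so the two arguments are essentially identical.
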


\begin{proof}
\begin{eqnarray*}
\si_t \Pi_s^l & = & \si_t \si_l \si_{l+1} \cdots \si_s \\
                      & = &  \si_l \cdots \si_{t-2} \si_t \si_{t-1} \si_t
\si_{t+1} \cdots \si_s \\
                      & = &  \si_l \cdots \si_{t-2} \si_{t-1} \si_t \si_{t-1}
\si_{t+1} \cdots \si_s \\
                      & = &  \si_l \cdots \si_s \si_{t-1} \\
                      & = &  \Pi_s^l \si_{t-1}
\end{eqnarray*}
\begin{flushright}
$\square$
\end{flushright}
\end{proof}

The following lemma is proved in the same manner.

\begin{lemma}\label{L3}
For $l <  t \leq s$,  $\si_{t-1} ( \Pi_s^l ) ^{-1} = ( \Pi_s^l ) ^{-1} \si_{t}$.
\end{lemma}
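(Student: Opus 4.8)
The plan is to prove Lemma~\ref{L3} by the same telescoping computation used for Lemma~\ref{L1}, now working with $(\Pi_s^l)^{-1}=\sigma_s^{-1}\sigma_{s-1}^{-1}\cdots\sigma_l^{-1}$. First I would slide the leading generator $\sigma_{t-1}$ to the right past $\sigma_s^{-1},\dots,\sigma_{t+1}^{-1}$; this is legitimate because $\sigma_{t-1}$ commutes with $\sigma_j$ whenever $j\ge t+1$. This brings $\sigma_{t-1}$ up against the factors $\sigma_t^{-1}\sigma_{t-1}^{-1}$ that already sit adjacently in the expansion of $(\Pi_s^l)^{-1}$ (they are present, and in this order, exactly because $l<t\le s$), producing the local block $\sigma_{t-1}\sigma_t^{-1}\sigma_{t-1}^{-1}$.

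The only nontrivial move is the inverse form of the braid relation, $\sigma_{t-1}\sigma_t^{-1}\sigma_{t-1}^{-1}=\sigma_t^{-1}\sigma_{t-1}^{-1}\sigma_t$, which follows from $\sigma_{t-1}\sigma_t\sigma_{t-1}=\sigma_t\sigma_{t-1}\sigma_t$ by a one-line rearrangement and inversion. Applying it turns the block into $\sigma_t^{-1}\sigma_{t-1}^{-1}\sigma_t$; its first two factors rejoin $\sigma_s^{-1}\cdots\sigma_{t+1}^{-1}$ to reconstitute $(\Pi_s^l)^{-1}$ up through index $t-1$, and the freed $\sigma_t$ slides to the right past the remaining $\sigma_{t-2}^{-1},\dots,\sigma_l^{-1}$ (commutation again, since $\sigma_t$ commutes with $\sigma_j$ for $j\le t-2$). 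What is left is precisely $(\Pi_s^l)^{-1}\sigma_t$.

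I do not expect a real obstacle; the only pitfall is getting the direction of the inverse braid relation right, since the superficially similar identity for $\sigma_{t-1}^{-1}\sigma_t^{-1}\sigma_{t-1}$ will not close up the computation. As an even quicker route, one can avoid the computation altogether: Lemma~\ref{L1} gives $\sigma_t\Pi_s^l=\Pi_s^l\sigma_{t-1}$, hence $(\Pi_s^l)^{-1}\sigma_t\Pi_s^l=\sigma_{t-1}$, and right-multiplying by $(\Pi_s^l)^{-1}$ yields $\sigma_{t-1}(\Pi_s^l)^{-1}=(\Pi_s^l)^{-1}\sigma_t$, which is exactly the claim.
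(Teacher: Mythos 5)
Your proposal is correct, and its main computation is exactly the paper's intended argument: the paper simply states that Lemma~\ref{L3} is ``proved in the same manner'' as Lemma~\ref{L1}, i.e., by the commutation-and-braid-relation telescoping you describe, and your use of the identity $\si_{t-1}\si_t^{-1}\si_{t-1}^{-1}=\si_t^{-1}\si_{t-1}^{-1}\si_t$ is the right local move. Your closing remark that the lemma also follows in one line by conjugating the identity of Lemma~\ref{L1} by $(\Pi_s^l)^{-1}$ is a valid, slicker alternative, though not the route the paper indicates.
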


These two lemmas are general statements that we can move generators of $B_q$
past $\Pi_s^l$ at the expense of changing the generator to the previous or the
next generator. The following four lemmas are more specialized to fit into the
proof of Theorem \ref{T1}.

\begin{lemma}\label{L5}
$\De_{r-1} (\Pi_{r-1})^{-r} = (\De_{r-1})^{-1}$
\end{lemma}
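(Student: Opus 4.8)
The plan is to obtain Lemma~\ref{L5} from a single uniform identity established by induction. Interpreting $\De_0$ and any $\De_{s}^{\,l}$ with $l>s$ as the empty word, I claim that for every $j$ with $1\le j\le r-1$,
\[
\De_{r-1}\lp\Pi_{r-1}\rp^{-j} \;=\; \lp\De_{j-1}\rp^{-1}\,\De_{r-1}^{\,j+1}.
\]
Granting this, the lemma follows in one more step: the case $j=r-1$ reads $\De_{r-1}\lp\Pi_{r-1}\rp^{-(r-1)}=\lp\De_{r-2}\rp^{-1}$, so
\[
\De_{r-1}\lp\Pi_{r-1}\rp^{-r} \;=\; \lp\De_{r-2}\rp^{-1}\lp\Pi_{r-1}\rp^{-1} \;=\; \lp\Pi_{r-1}\De_{r-2}\rp^{-1} \;=\; \lp\De_{r-1}\rp^{-1},
\]
using the defining recursion $\De_{r-1}=\Pi_{r-1}\De_{r-2}$. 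Morally the displayed identity just says that $\De_{r-1}\lp\Pi_{r-1}\rp^{-j}$ is the inverse half-twist on the first $j$ strands times the half-twist on the last $r-j$, which is why a formula of this shape is to be expected.

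For the base case $j=1$ I write $\De_{r-1}=\Pi_{r-1}\Pi_{r-2}\cdots\Pi_1$ and slide $\lp\Pi_{r-1}\rp^{-1}$ leftward through this product. Each time it crosses a block $\Pi_i=\si_1\si_2\cdots\si_i$ with $i\le r-2$, Lemma~\ref{L3} turns that block into $\si_2\si_3\cdots\si_{i+1}=\Pi_{i+1}^{\,2}$; these collect to $\Pi_{r-1}^{\,2}\Pi_{r-2}^{\,2}\cdots\Pi_2^{\,2}=\De_{r-1}^{\,2}$, and then $\lp\Pi_{r-1}\rp^{-1}$ cancels freely against the leading block $\Pi_{r-1}$. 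Thus $\De_{r-1}\lp\Pi_{r-1}\rp^{-1}=\De_{r-1}^{\,2}$ (the superscript being Garside's, not an exponent), which is the claim for $j=1$.

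The inductive step is the same manoeuvre applied to $\De_{r-1}^{\,j+1}=\Pi_{r-1}^{\,j+1}\Pi_{r-2}^{\,j+1}\cdots\Pi_{j+1}^{\,j+1}$ in place of $\De_{r-1}$: crossing the lower blocks $\Pi_i^{\,j+1}$ with $i\le r-2$ (again by Lemma~\ref{L3}) produces $\De_{r-1}^{\,j+2}$, while against the leading block there is now only the partial cancellation $\Pi_{r-1}^{\,j+1}\lp\Pi_{r-1}\rp^{-1}=\lp\Pi_j\rp^{-1}$, valid because $\Pi_{r-1}^{\,j+1}=\si_{j+1}\cdots\si_{r-1}$ is a terminal segment of $\Pi_{r-1}$. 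Feeding in the inductive hypothesis and using $\De_j=\Pi_j\De_{j-1}$ gives
\[
\De_{r-1}\lp\Pi_{r-1}\rp^{-(j+1)} \;=\; \lp\De_{j-1}\rp^{-1}\lp\Pi_j\rp^{-1}\De_{r-1}^{\,j+2} \;=\; \lp\De_j\rp^{-1}\De_{r-1}^{\,j+2},
\]
which is the assertion for $j+1$, completing the induction.

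The one point to watch is the top of the range: Lemma~\ref{L3} lets us move $\lp\Pi_{r-1}\rp^{-1}$ past $\si_{t-1}$ only when $t\le r-1$, so it cannot move it past $\si_{r-1}$; this is exactly why the leading block must be handled by direct free cancellation rather than by Lemma~\ref{L3}, and it is where one must keep the Garside superscripts and the ordinary subscripts straight. Nothing beyond Lemma~\ref{L3} and $\De_j=\Pi_j\De_{j-1}$ is used, and the argument needs no separate treatment of small $r$. Alternatively one can bypass the induction altogether: left-multiplying the claim by $\De_{r-1}$ turns it into $\lp\De_{r-1}\rp^{2}=\lp\Pi_{r-1}\rp^{r}$, which, since $\De_{r-1}$ and $\Pi_{r-1}$ involve only $\si_1,\dots,\si_{r-1}$, is just the classical identity that the full twist of $B_r$ equals $\lp\si_1\si_2\cdots\si_{r-1}\rp^{r}$ --- the same central-element fact already invoked for $B_q$ in the discussion before Proposition~\ref{P1}.
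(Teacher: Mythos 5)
Your proof is correct, and you handle the delicate points explicitly: the leading block must be removed by free cancellation because Lemma \ref{L3} cannot shift a generator past $\si_{r-1}$, the conventions $\De_0=\De_{r-1}^{\,r}=1$ make the endpoints of the induction come out right, and the argument degenerates gracefully when $r=2$. The mechanism is the same as the paper's --- both proofs are nothing but repeated use of Lemma \ref{L3} plus cancellation against $(\Pi_{r-1})^{-1}$ --- but the bookkeeping is organized differently. The paper keeps the prefix $\Pi_{r-1}\cdots\Pi_j$ of $\De_{r-1}$ intact and absorbs the blocks one at a time from the innermost end, so its intermediate expressions look like $\Pi_{r-1}\cdots\Pi_j(\Pi_{r-1})^{-r+j}(\Pi_{r-j})^{-1}\cdots(\Pi_{r-1})^{-1}$; you instead push one copy of $(\Pi_{r-1})^{-1}$ at a time through the whole remaining positive word and maintain the closed-form invariant $\De_{r-1}(\Pi_{r-1})^{-j}=(\De_{j-1})^{-1}\De_{r-1}^{\,j+1}$. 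Your invariant is arguably more transparent (it says exactly ``inverse half twist on the first $j$ strands times the half twist on the rest''), and it is the same flavor of statement as Lemmas \ref{L7} and \ref{L8}, so it could serve as a uniform template for those as well. Your closing observation is the genuinely different route: the lemma is equivalent to $(\De_{r-1})^{2}=(\Pi_{r-1})^{r}$, i.e.\ to the classical identity that the square of the half twist in $B_r$ is the full twist $\lp\si_1\cdots\si_{r-1}\rp^{r}$, which yields a one-line proof at the cost of citing that identity (it is in Garside and in Murasugi--Kurpita, though it is slightly more than the centrality statement the paper quotes for $B_q$); the paper's computation, like your induction, has the virtue of being self-contained from Lemma \ref{L3}.
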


\begin{proof} The proof employs repeated use of Lemma \ref{L3}.
\begin{eqnarray*}
\De_{r-1} (\Pi_{r-1})^{-r} & = & \Pi_{r-1} \Pi_{r-2} \cdots \Pi_1
(\Pi_{r-1})^{-r} \\
                                           & = & \Pi_{r-1} \Pi_{r-2} \cdots
\Pi_2 (\Pi_{r-1})^{-r+2} \Pi_{r-1}^{r-1} (\Pi_{r-1})^{-2} \\
                                           & = &  \Pi_{r-1} \Pi_{r-2} \cdots
\Pi_2 (\Pi_{r-1})^{-r+2} (\Pi_{r-2})^{-1} (\Pi_{r-1})^{-1} \\
                                           & = &  \Pi_{r-1} \Pi_{r-2} \cdots
\Pi_3  (\Pi_{r-1})^{-r+3} \Pi_{r-1}^{r-2} (\Pi_{r-1})^{-1} (\Pi_{r-2})^{-1}
(\Pi_{r-1})^{-1} \\ 
                                           & =& \Pi_{r-1} \Pi_{r-2} \cdots \Pi_3
 (\Pi_{r-1})^{-r+3} (\Pi_{r-3})^{-1} (\Pi_{r-2})^{-1} (\Pi_{r-1})^{-1}\\
                                           & = & \Pi_{r-1} \cdots \Pi_j
(\Pi_{r-1})^{-r+j} (\Pi_{r-j})^{-1} (\Pi_{r-j+1})^{-1}  \cdots (\Pi _{r-1})^{-1}
\\
                                           & = & \Pi_{r-1} \Pi_{r-2} (\Pi_{r-1})
^{-2} (\Pi_{2})^{-1} (\Pi_{3})^{-1} \cdots (\Pi_{r-1})^{-1} \\
                                           & = &  \Pi_{r-1} (\Pi_{r-1}) ^{-1} 
(\Pi_{1})^{-1} (\Pi_{2})^{-1} \cdots (\Pi_{r-1})^{-1} \\
                                           & = & (\Pi_{1})^{-1} (\Pi_{2})^{-1}
\cdots (\Pi_{r-1})^{-1} \\
                                           & = & ( \De_{r-1})^{-1} \\
\end{eqnarray*}
\begin{flushright}
$\square$
\end{flushright}
\end{proof}

\begin{lemma}\label{L6}
$\De_r^2 (\Pi_r )^{-r} = (\De_r)^{-1}$
\end{lemma}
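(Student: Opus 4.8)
The plan is to reduce Lemma~\ref{L6} to two standard facts about the Garside half-twist $\De_r$ of the braid group $B_{r+1}$ on strands $1,\dots,r+1$. The first is the full-twist identity $(\De_r)^2 = (\Pi_r)^{r+1}$, i.e.\ the statement (already used above for $B_q$) that $(\Pi_r)^{r+1}$ generates the centre of $B_{r+1}$; it is in any case just Lemma~\ref{L5} with $r$ replaced by $r+1$, since that reads $\De_r(\Pi_r)^{-(r+1)} = (\De_r)^{-1}$. The second is Garside's rewriting rule $\De_s^l = \De_s^{l+1}\,\Pi_s^l$ \citep{gars}, which for $s=r$, $l=1$ reads $\De_r = \De_r^2\,\Pi_r$ (here $\De_r^2$ is the Garside element with lower index $2$, not a power of $\De_r$, following the convention that genuine exponents sit outside parentheses). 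Given these, the proof is two lines: $(\Pi_r)^{-r} = \Pi_r\,(\Pi_r)^{-(r+1)} = \Pi_r\,(\De_r)^{-2}$, so
\[ \De_r^2\,(\Pi_r)^{-r} \;=\; \De_r^2\,\Pi_r\,(\De_r)^{-2} \;=\; \De_r\,(\De_r)^{-2} \;=\; (\De_r)^{-1}. \]

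A second route, closer to the style of the proof of Lemma~\ref{L5}, is a direct iterated peeling. Write $\De_r^2 = \Pi_r^2\,\Pi_{r-1}^2\cdots\Pi_2^2$ and strip the factors off the right one at a time. Stripping $\Pi_2^2 = \si_2$ and pushing it rightward through $(\Pi_r)^{-r}$ by repeated use of Lemma~\ref{L3} turns it into $\si_3, \si_4, \dots, \si_r$ in turn while $r-2$ copies of $(\Pi_r)^{-1}$ pile up to its left, after which $\si_r(\Pi_r)^{-1} = (\Pi_{r-1})^{-1}$. In general, stripping the block $\Pi_j^2 = \si_2\cdots\si_j$ and pushing it through the power of $(\Pi_r)^{-1}$ then available advances the block to $\si_{r-j+2}\cdots\si_r$, which collapses by the telescoping identity $(\si_a\cdots\si_r)(\Pi_r)^{-1} = (\Pi_{a-1})^{-1}$ to $(\Pi_{r-j+1})^{-1}$. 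Processing all $r-1$ blocks this way leaves $(\Pi_1)^{-1}(\Pi_2)^{-1}\cdots(\Pi_r)^{-1} = (\De_r)^{-1}$, the final collapse being $\Pi_r^2\,(\Pi_r)^{-1} = (\Pi_1)^{-1}$. This is the computation displayed for Lemma~\ref{L5} with every index raised by one.

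Neither route presents a real obstacle; the lemma is short. The points that need care are: keeping $\De_r^2$ notationally distinct from $(\De_r)^2$; in the direct route, verifying the index bookkeeping, namely that at the stage of the block $\Pi_j^2$ exactly $r-j$ pushes carry it to the top of the range with exactly one copy of $(\Pi_r)^{-1}$ left over for the collapse (the same count that makes the proof of Lemma~\ref{L5} go through); and, in the structural route, establishing $\De_r = \De_r^2\,\Pi_r$, which either is quoted from \citep{gars} or is proved by a short induction on $r$ using Lemma~\ref{L1}.
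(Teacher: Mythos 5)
Your proposal is correct, and your second route is in fact the paper's proof: the paper gives no separate argument for Lemma \ref{L6}, saying only that it is proved like Lemma \ref{L5}, and your index-shifted peeling (strip $\Pi_j^2$, push it upward with Lemma \ref{L3}, collapse via $(\si_{r-j+2}\cdots\si_r)(\Pi_r)^{-1}=(\Pi_{r-j+1})^{-1}$, ending with $\Pi_r^2(\Pi_r)^{-1}=(\Pi_1)^{-1}$) reproduces that computation with every index raised by one, and your count of pushes and leftover copies of $(\Pi_r)^{-1}$ is right. Your first route is a genuinely different, slightly slicker derivation: it replaces the peeling by the two structural identities $(\De_r)^2=(\Pi_r)^{r+1}$ (which, as you note, is exactly Lemma \ref{L5} with the index shifted, or the full-twist fact already used in the text) and the recursion $\De_r=\De_r^2\,\Pi_r$, which does follow from iterated use of Lemma \ref{L1} since $\Pi_m^2\,\Pi_r=\Pi_r\,\Pi_{m-1}$; what it buys is a two-line proof at the cost of importing those two facts. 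You also resolved the one real pitfall correctly: $\De_r^2$ here is the Garside element on the generators $\si_2,\dots,\si_r$, not a square of $\De_r$ --- under the latter misreading the statement would be false, since $(\De_r)^2(\Pi_r)^{-r}=\Pi_r$.
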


The proof of Lemma \ref{L6} is very similar to that of Lemma \ref{L5}

\begin{lemma}\label{L7}
$( \De_r ) ^{-1} (\Pi_{2r})^{r+1} = \Pi_{2r}^{r+1} \Pi_{2r}^r \cdots \Pi_{2r}^2
\Pi_{2r}$
\end{lemma}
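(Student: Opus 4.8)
The plan is to prove the identity $(\De_r)^{-1}(\Pi_{2r})^{r+1} = \Pi_{2r}^{r+1}\Pi_{2r}^r\cdots\Pi_{2r}^2\Pi_{2r}$ by induction, peeling off one factor $\Pi_{2r} = \si_1\si_2\cdots\si_{2r}$ at a time from the right of $(\Pi_{2r})^{r+1}$ and commuting it leftward past the remaining structure using Lemma \ref{L1}. Recall $\De_r = \Pi_r\Pi_{r-1}\cdots\Pi_1$ where $\Pi_j = \si_1\si_2\cdots\si_j$, so $(\De_r)^{-1} = (\Pi_1)^{-1}(\Pi_2)^{-1}\cdots(\Pi_r)^{-1}$. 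The target right-hand side is a descending product $\Pi_{2r}^{r+1}\Pi_{2r}^r\cdots\Pi_{2r}^1$, i.e. $r+1$ blocks whose lower indices start at $r+1, r, \dots, 1$ and all of which end at $\si_{2r}$.

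First I would establish a commutation rule of the form: for appropriate ranges of the index $j$, $(\Pi_j)^{-1}\Pi_{2r} = \Pi_{2r}^{?}\,(\Pi_{j-1})^{-1}$ or something comparably clean — more precisely, I expect that pushing a full $\Pi_{2r}$ from the right past $(\De_r)^{-1}$ turns it into one of the blocks $\Pi_{2r}^{r+1}$ while the surviving inverse factors shrink. The cleanest route is likely to combine Lemma \ref{L1} (which gives $\si_t\Pi_s^l = \Pi_s^l\si_{t-1}$) with its inverted form (Lemma \ref{L3}) and with the known relation $\De_s = \Pi_s \De_{s-1}^2$-type recursions implicit in Garside's setup; the key computational step is to verify the base case $(\De_r)^{-1}\Pi_{2r} = \Pi_{2r}^{r+1}$, which itself is a small induction using Lemma \ref{L1} to slide each $\si_t$ with $t\le r$ down through the $(\Pi_j)^{-1}$ factors. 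Concretely, $(\Pi_r)^{-1}\Pi_{2r} = (\Pi_r)^{-1}\si_1\cdots\si_r\si_{r+1}\cdots\si_{2r} = \si_{r+1}\cdots\si_{2r} = \Pi_{2r}^{r+1}$, and then one checks that the remaining factors $(\Pi_1)^{-1}\cdots(\Pi_{r-1})^{-1}$ commute harmlessly with $\Pi_{2r}^{r+1}$ (whose lowest index is $r+1$), or are themselves consumed in the process — this is the step to get exactly right.

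Once the base case and the commutation rule are in hand, the induction is: assume $(\De_r)^{-1}(\Pi_{2r})^{m} = \Pi_{2r}^{m+ \text{(appropriate offset)}}\cdots\Pi_{2r}$ for the corresponding truncated product, then multiply on the right by one more $\Pi_{2r}$ and commute it through to produce the next block $\Pi_{2r}^{m+1}$ at the front of the string of already-formed blocks; the blocks already present have lowest index $\ge 2$, so a full $\Pi_{2r}$ passing to their left interacts with them only via Lemma \ref{L1}-style index shifts that must be tracked carefully. The main obstacle I anticipate is bookkeeping: making sure the lower index of each block lands on exactly $r+1, r, \dots, 1$ in the correct order and that no stray generators are left over. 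This is exactly the kind of telescoping displayed in the proof of Lemma \ref{L5}, so I would mirror that style — write out the first two or three steps of the commutation explicitly, identify the pattern with a generic index $j$, and then close the induction.
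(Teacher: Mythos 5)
Your core computation, $(\Pi_r)^{-1}\Pi_{2r}=\Pi_{2r}^{r+1}$, together with the observation that a block $\Pi_{2r}^{j}$ commutes with every $(\Pi_i)^{-1}$ having $i\le j-2$, is exactly the engine of the paper's proof, so the strategy is the right one. But the two statements you build the induction on are wrong as written, and they are precisely the bookkeeping you flagged as needing care. First, the ``base case'' $(\De_r)^{-1}\Pi_{2r}=\Pi_{2r}^{r+1}$ is false: the leftover factors do not disappear, and the correct identity is $(\De_r)^{-1}\Pi_{2r}=(\Pi_1)^{-1}\cdots(\Pi_{r-1})^{-1}\Pi_{2r}^{r+1}=\Pi_{2r}^{r+1}(\De_{r-1})^{-1}$; the residue $(\De_{r-1})^{-1}$ must survive so that it can be consumed by the later copies of $\Pi_{2r}$. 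Second, the induction step as you describe it --- pushing each new full copy of $\Pi_{2r}$ leftward past the blocks already formed so that the new block appears ``at the front'' --- would fail: a full $\Pi_{2r}$ contains every generator and does not commute past a block $\Pi_{2r}^{j}$ (Lemma \ref{L1}-type shifts turn it into a different word), and in the true identity the blocks accumulate at the right end with decreasing lower index $r+1,r,\dots,2,1$, not at the left.

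The fix, which is what the paper's computation does, is to keep the residual inverse factors between the formed blocks and the unused power of $\Pi_{2r}$, and let each incoming copy of $\Pi_{2r}$ be eaten there: prove by induction on $m$ that for $1\le m\le r$ one has $(\De_r)^{-1}(\Pi_{2r})^{m}=\Pi_{2r}^{r+1}\Pi_{2r}^{r}\cdots\Pi_{2r}^{r-m+2}\,(\De_{r-m})^{-1}$. The inductive step uses only the free cancellation $(\Pi_j)^{-1}\Pi_{2r}=\Pi_{2r}^{j+1}$ (since $\Pi_{2r}=\Pi_j\,\Pi_{2r}^{j+1}$) and the fact that $\Pi_{2r}^{j+1}$ commutes with $(\Pi_1)^{-1},\dots,(\Pi_{j-1})^{-1}$ because all indices involved differ by at least $2$; thus the newly created block slides left only past low-index inverse factors, never past other blocks or other copies of $\Pi_{2r}$. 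Taking $m=r$ leaves $\Pi_{2r}^{r+1}\cdots\Pi_{2r}^{2}$ with empty residue, and the one remaining copy of $\Pi_{2r}$ is the final block, which is the statement of the lemma. With this correction your plan coincides with the paper's telescoping; note that neither Lemma \ref{L1} nor Lemma \ref{L3} nor any recursion for $\De$ is actually needed.
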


\begin{proof}
\begin{eqnarray*}
( \De_r ) ^{-1} (\Pi_{2r})^{r+1} & = & (\Pi_1)^{-1} (\Pi_2)^{-1} \cdots
(\Pi_r)^{-1}  (\Pi_{2r})^{r+1}\\
                                & = & (\Pi_1)^{-1} (\Pi_2)^{-1} \cdots
(\Pi_{r-1})^{-1} \Pi_{2r}^{r+1} (\Pi_{2r})^{r} \\
                                & = & \Pi_{2r}^{r+1} (\Pi_1)^{-1} (\Pi_2)^{-1}
\cdots (\Pi_{r-1})^{-1} (\Pi_{2r})^{r} \\
                                & = & \Pi_{2r}^{r+1} (\Pi_1)^{-1} (\Pi_2)^{-1}
\cdots (\Pi_{r-2})^{-1} \Pi_{2r}^{r}  (\Pi_{2r})^{r} \\
                                & = & \Pi_{2r}^{r+1} \Pi_{2r}^{r} \cdots
\Pi_{2r}^{j+2} (\Pi_1)^{-1} (\Pi_2)^{-1} \cdots (\Pi_{j})^{-1} (\Pi_{2r})^{j+1}
\\
                                & = & \Pi_{2r}^{r+1} \Pi_{2r}^{r} \cdots
\Pi_{2r}^{4} (\Pi_1)^{-1} (\Pi_2)^{-1} (\Pi_{2r})^{3} \\
                                & = & \Pi_{2r}^{r+1} \Pi_{2r}^{r} \cdots
\Pi_{2r}^{3}  (\Pi_1)^{-1} (\Pi_{2r})^{2} \\
                                & = & \Pi_{2r}^{r+1} \Pi_{2r}^{r} \cdots
\Pi_{2r}^{2} \Pi_{2r} \\
\end{eqnarray*}
\begin{flushright}
$\square$
\end{flushright}
\end{proof}

\begin{lemma}\label{L8}
$(\Pi_{2r} )^{r+1} (\De_{2r}^{r+1} )^{-1} = \Pi_{2r} \Pi_{2r-1} \cdots \Pi_r$
\end{lemma}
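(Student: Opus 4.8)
The plan is to deduce Lemma~\ref{L8} from Lemma~\ref{L7} by applying a symmetry of the braid group $B_q = B_{2r+1}$. (Recall $q=2r+1$, so $\si_1,\dots,\si_{2r}$ are all of its generators.) I would introduce the automorphism $\phi\colon B_{2r+1}\to B_{2r+1}$ determined by $\phi(\si_i)=\si_{2r+1-i}$ — this is well defined because the defining relations are symmetric under $i\mapsto 2r+1-i$ — and set $\psi=\rev\circ\,\phi$. Then $\psi$ is an anti-automorphism that acts on a word by reversing the order of its letters and simultaneously replacing each $\si_i$ by $\si_{2r+1-i}$; in particular $\psi(w^{-1})=\psi(w)^{-1}$. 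The whole proof then consists of applying $\psi$ to the identity of Lemma~\ref{L7}.

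First I would record the action of $\psi$ on the pieces that appear. Checking on generators gives $\psi(\Pi_{2r})=\Pi_{2r}$, so $\psi$ fixes every power $(\Pi_{2r})^{m}$; more generally $\psi(\Pi_{2r}^{\ell})=\Pi_{2r+1-\ell}$ for $1\le\ell\le 2r$, so (after the reversal $\psi$ forces) $\psi$ carries the right-hand side $\Pi_{2r}^{r+1}\Pi_{2r}^{r}\cdots\Pi_{2r}^{2}\Pi_{2r}$ of Lemma~\ref{L7} to $\Pi_{2r}\Pi_{2r-1}\cdots\Pi_{r}$. The one non-mechanical point is that $\psi(\De_r)=\De_{2r}^{r+1}$: the restriction of $\phi$ carries the parabolic subgroup $\langle\si_1,\dots,\si_r\rangle$ isomorphically onto $\langle\si_{r+1},\dots,\si_{2r}\rangle$ and matches up their standard generators (in reversed order), hence it sends the Garside half-twist $\De_r$ of the first to the Garside half-twist $\De_{2r}^{r+1}$ of the second, and $\rev$ fixes that half-twist; both facts about $\De$ are standard \citep{gars}.

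Granting this, I would apply $\psi$ to $(\De_r)^{-1}(\Pi_{2r})^{r+1}=\Pi_{2r}^{r+1}\Pi_{2r}^{r}\cdots\Pi_{2r}^{2}\Pi_{2r}$. Because $\psi$ reverses products, the left side becomes $\psi\bigl((\Pi_{2r})^{r+1}\bigr)\,\psi\bigl((\De_r)^{-1}\bigr)=(\Pi_{2r})^{r+1}(\De_{2r}^{r+1})^{-1}$ and the right side becomes $\Pi_{2r}\Pi_{2r-1}\cdots\Pi_{r}$, which is precisely Lemma~\ref{L8}.

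The hard part is the identity $\psi(\De_r)=\De_{2r}^{r+1}$, equivalently the fact that the Garside half-twist on $\si_{r+1},\dots,\si_{2r}$ can also be written $\Pi_{2r}^{2r}\Pi_{2r}^{2r-1}\cdots\Pi_{2r}^{r+1}$, alongside its defining form $\Pi_{2r}^{r+1}\Pi_{2r-1}^{r+1}\cdots\Pi_{r+1}^{r+1}$. This is classical, but if a self-contained treatment is wanted it can be proved by a short induction of the same flavour as the computation in Lemma~\ref{L5}. An alternative, which avoids the symmetry language entirely, is to prove Lemma~\ref{L8} directly in parallel with Lemma~\ref{L7}: using that form of $\De_{2r}^{r+1}$, write $(\De_{2r}^{r+1})^{-1}=(\Pi_{2r}^{r+1})^{-1}(\Pi_{2r}^{r+2})^{-1}\cdots(\Pi_{2r}^{2r})^{-1}$ and repeatedly use $\Pi_{2r}(\Pi_{2r}^{j})^{-1}=\Pi_{j-1}$ together with Lemma~\ref{L1} to absorb these inverse factors one at a time into $(\Pi_{2r})^{r+1}$, each step peeling off one $\Pi_{2r}$ and contributing a factor $\Pi_{j-1}$, with $j-1$ running from $r$ up to $2r-1$; the surviving copy of $\Pi_{2r}$ winds up at the front, giving $\Pi_{2r}\Pi_{2r-1}\cdots\Pi_{r}$. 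Either way the essential input is that same alternative form of $\De_{2r}^{r+1}$.
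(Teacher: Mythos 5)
Your proof is correct, but it follows a genuinely different route from the paper's: the paper proves Lemma~\ref{L8} by a direct calculation parallel to the one written out for Lemma~\ref{L7} (it simply remarks that the proof is similar), whereas you deduce Lemma~\ref{L8} formally from Lemma~\ref{L7} by applying the anti-automorphism $\psi=\rev\circ\phi$ of $B_q=B_{2r+1}$, where $\phi(\si_i)=\si_{2r+1-i}$. Your bookkeeping checks out: $\psi(\Pi_{2r})=\Pi_{2r}$ and $\psi(\Pi_{2r}^{\ell})=\Pi_{2r+1-\ell}$, so the right-hand side of Lemma~\ref{L7} is carried to $\Pi_{2r}\Pi_{2r-1}\cdots\Pi_r$, and you correctly isolate the one substantive input, $\psi(\De_r)=\De_{2r}^{r+1}$, equivalently the alternative factorization $\De_{2r}^{r+1}=\Pi_{2r}^{2r}\Pi_{2r}^{2r-1}\cdots\Pi_{2r}^{r+1}$; this does follow from the classical invariance of the half-twist under $\rev$ and under the index flip \citep{gars}, applied to the parabolic copy of $B_{r+1}$ on $\si_{r+1},\dots,\si_{2r}$, or from the short induction you offer as a fallback. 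What your approach buys is economy and an explanation of why the paper can dismiss Lemma~\ref{L8} as ``proved similarly'': the mirror symmetry relating the two lemmas is made explicit once, and no second computation is needed. What the paper's route buys is self-containedness: it uses only the elementary commutation lemmas already established, whereas you must either import standard properties of the Garside element or prove the alternative form of $\De_{2r}^{r+1}$ by hand, which is comparable in length to the paper's direct argument. (One minor point: in your fallback computation, pushing the remaining factors $\lp\Pi_{2r}^{j}\rp^{-1}$ past the blocks $\Pi_{j'}$ already produced uses commutativity of generators with index difference at least two rather than Lemma~\ref{L1} itself; this is harmless but should be said precisely.)
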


Lemma \ref{L8} is proved similarly to Lemma \ref{L7}.

Now we can easily prove Proposition \ref{P1}. 

\begin{proof}[Proof of Proposition \ref{P1}]
Using the modified Garside notation, we can rewrite the statement of the
proposition. Then the proposition will be true if and only if
\small{$\left(\mbox{rev}\Pi_{2r}\right)^r \left(\mbox{rev} \Pi_{r-1}
\right)^{-r}\mbox{rev}\De_{r-1} \mbox{rev}\De_{2r}^{r+1} =\mbox{rev}\De_{r-1}
\mbox{rev}\De_{2r}^{r+1}\lp\mbox{rev} \Pi_{2r} \rp^{r+1} \lp\mbox{rev} \Pi_r
\rp^{-r-1}$} in $B_q$. It is easy to see that two words are equal if and only if
their reverse words are equal. Using this, we can change the original equation
to: 

\begin{equation}\label{eq0}
\De_{2r}^{r+1} \De_{r-1} \left( \Pi_{r-1} \right)^{-r} \left(\Pi_{2r}\right)^r =
\lp \Pi_r \rp^{-r-1} \lp \Pi_{2r} \rp^{r+1} \De_{2r}^{r+1} \De_{r-1}
\end{equation} 

Next use the fact that $\De_{r-1} \left( \Pi_{r-1} \right)^{-r} $ and
$\De_{2r}^{r+1}$ commute and Lemma \ref{L1} to say that the proposition is true
if and only if 

\begin{equation}\label{eq1}
\De_{r-1} \left( \Pi_{r-1} \right)^{-r} \left(\Pi_{2r}\right)^r \De_r = \lp
\Pi_r \rp^{-r-1} \lp \Pi_{2r} \rp^{r+1} \De_{2r}^{r+1} \De_{r-1}
\end{equation} 

Now multiply on the right by $\lp \De_{r-1} \rp^{-1}$, so we have

\begin{equation}\label{eq2}
\De_{r-1} \left( \Pi_{r-1} \right)^{-r} \left(\Pi_{2r}\right)^r \Pi_r = \lp
\Pi_r \rp^{-r-1} \lp \Pi_{2r} \rp^{r+1} \De_{2r}^{r+1}
\end{equation}

Use Lemma \ref{L5} to rewrite this equation as 
\begin{equation}\label{eq3}
(\De_{r-1} )^{-1} (\Pi_{2r} )^r \Pi_r= ( \Pi_r ) ^ {-r-1} (\Pi_{2r})^{r+1}
\De_{2r}^{r+1}
\end{equation}

Next multiply by $\De_{r-1}$ on the left and use Lemmas \ref{L1} and \ref{L6} to
obtain
\begin{eqnarray*}
(\Pi_{2r} )^r \Pi_r & = & \De_{r-1} ( \Pi_r ) ^ {-r-1} (\Pi_{2r})^{r+1}
\De_{2r}^{r+1} \\
& = & (\Pi_r)^{-1} \De_{r}^2 ( \Pi_r ) ^ {-r} (\Pi_{2r})^{r+1} \De_{2r}^{r+1} \\
& = & (\Pi_r)^{-1} ( \De_r )^{-1} (\Pi_{2r})^{r+1} \De_{2r}^{r+1} \\
\end{eqnarray*}
 
Multiply on the left by $\Pi_r$, with the result  
\begin{equation}\label{eq6}
\Pi_r (\Pi_{2r} )^r \Pi_r  = (\De_r)^{-1} (\Pi_{2r})^{r+1} \De_{2r}^{r+1}  
\end{equation}

Lemma \ref{L1} gives that the left side of this equation is $\Pi_r
\Pi_{2r}^{r+1} (\Pi_{2r} )^r$. Since $\Pi_r \Pi_{2r}^{r+1} = \Pi_{2r}$,  
\begin{equation}\label{eq7}
(\Pi_{2r} )^{r+1} =  ( \De_r ) ^{-1} (\Pi_{2r} )^{r+1} \De_{2r}^{r+1}
\end{equation}

Finally multiply on the right by $(\De_{2r}^{r+1} ) ^{-1}$ to obtain the
equation 
\begin{equation}\label{eq8}
 (\Pi_{2r} )^{r+1} ( \De_{2r}^{r+1} ) ^{-1}= ( \De_r ) ^{-1} (\Pi_{2r} )^{r+1} 
\end{equation}

By Lemmas \ref{L1}, \ref{L7} and \ref{L8}, equation \eqref{eq8} becomes 
\begin{eqnarray*}
\Pi_{2r} \Pi_{2r-1} \cdots \Pi_r & = &  \Pi_{2r}^{r+1} \Pi_{2r}^r \cdots
\Pi_{2r}^2 \Pi_{2r} \\
                                                     & = &  \Pi_{2r} \Pi_{2r-1}
\cdots \Pi_r                       
\end{eqnarray*}
\begin{flushright}
$\square$
\end{flushright}
\end{proof}

Hence we have Proposition \ref{P1}, which we use to prove Theorem \ref{T1}.

\begin{proof}[Proof of Theorem \ref{T1}]
It is easy to see that these parameters meet the requirements $1<q<\frac{p}{2}$
and $2\le k\le \frac{p-2}{q}$. By Proposition \ref{slope}, the curves $K_1 =
K\lp kq +\frac{q-1}{2},q, \frac{q-1}{2}, -1 \rp$ and $K_2 = K\lp 
kq+\frac{q+1}{2},q, \frac{q+1}{2}, -1 \rp$ both have surface slope $ \lp
kq+\frac{q-1}{2} \rp q - \lp \frac{q-1}{2}\rp^2$. Proposition \ref{SFS} tells us
that $H[K_1] \cong D^2\lp k, \frac{q-1}{2} \rp$ and $H[K_2] \cong D^2\lp k,
\frac{q+1}{2} \rp$. Because $K_1$ and $K_2$ are both Seifert with respect to
$H$ and primitive with respect to $H'$, a homeomorphism $h$ of $S^3$ with $h\lp F, K_1\rp =\lp F, K_2 \rp$ would have to send $H$ to itself. Then $h$ must extend to a
homeomorphism of the two Seifert fibered spaces, $H[K_1]$ and $H[K_2]$, which is impossible. Hence no
such homeomorphism exists. We have shown that the two braids $\beta_1$ and
$\beta_2$ are conjugate, so their closures, $K_1$ and $K_2$, are isotopic in
$S^3$. 
\begin{flushright}
$\square$
\end{flushright}
\end{proof}

\begin{section}{Primitive/primitive and primitive/Seifert representatives}\label{pt2}

In this section, we describe a family of torus knots that have a primitive/primitive and a primitive/Seifert representative with the same surface slope. The primitive/Seifert representatives of this family of torus knots arises from the twisted torus knot construction, so this family of knots can be viewed as a family of twisted torus knots that are actually torus knots. 
From \citep{moser}, we know that the only integral surgery slopes on the torus knot $T(p,q)$ that yield lens spaces are $pq \pm 1$. Surgery at the surface slope on a primitive/primitive representative of a knot yields a lens space \citep{berge}. Hence, if a $(p,q)$ torus knot is to have a primitive/primitive representative, it must lie on the genus 2 surface with surface slope $pq \pm 1$. In particular, $T(p, q)$ can be considered to be a twisted torus knot of the type $K(p,q,1, \pm1)$, which is primitive/primitive and has surface slope $pq\pm1$.

Consider the torus knot $K_1 = T(kq+1, q) = K(kq+1, q, 1, -1)$, and the twisted torus knot $K_2 = K((k+1)q -1, q, q-1, -1)$ where $q \ge 3$ and $k \ge 2$. Each of these knots has surface slope $kq^2+q-1$. $K_1$ is primitive/primitive with respect to $F$ and has a lens space surgery at the surface slope. $K_2$ is of the form in Theorem \ref{psttks}, so $H[K_2] \cong D^2 (k, q-1)$, and $kq^2 +q+1$ surgery on $K_2$ yields a lens space which has the Seifert fibering $S^2(k, q-1, -1)$. These observations lead to the following result.

\begin{theorem}\label{p1}
$K_1$ and $K_2$ are isotopic as knots in $S^3$ and have the same surface slope with respect to $F$, but there is no homeomorphism of $S^3$ sending $(F, K_1)$ to $(F, K_2)$. 
\end{theorem}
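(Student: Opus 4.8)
The plan is to verify three things: that $K_1$ and $K_2$ have the same surface slope, that no self-homeomorphism of $S^3$ carries $\lp F,K_1\rp$ to $\lp F,K_2\rp$, and that $K_1$ and $K_2$ are isotopic in $S^3$. The first two follow quickly from the material already in place. For the slopes, Proposition~\ref{slope} gives surface slope $\lp kq+1\rp q-1=kq^2+q-1$ for $K_1=K\lp kq+1,q,1,-1\rp$ and surface slope $\lp\lp k+1\rp q-1\rp q-\lp q-1\rp^2=kq^2+q-1$ for $K_2=K\lp\lp k+1\rp q-1,\,q,\,q-1,\,-1\rp$, so they agree. For the second assertion, a self-homeomorphism $h$ of $S^3$ with $h(F)=F$ must carry the unordered pair of complementary handlebodies $\lbrace H,H'\rbrace$ to itself, either preserving each factor or interchanging them. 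Since $K_1$ is primitive/primitive, $2$-handle addition along $K_1$ gives a solid torus on each side of $F$; since $K_2$ is of the form in Theorem~\ref{psttks} (so $H[K_2]\cong D^2\lp k,q-1\rp$ by Proposition~\ref{SFS}), $2$-handle addition along $K_2$ gives a solid torus on the $H'$-side and the Seifert fibered space $D^2\lp k,q-1\rp$ on the $H$-side, which by Lemma~\ref{2HA} is not a solid torus because neither $k$ nor $q-1$ equals $1$. Thus the unordered pair of manifolds obtained by $2$-handle addition along the knot on the two sides of $F$ consists of two solid tori for $K_1$ but of a solid torus together with $D^2\lp k,q-1\rp$ for $K_2$; an $h$ as above would induce a homeomorphism matching these pairs, whether or not it swaps $H$ and $H'$, so none exists.

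For the isotopy I would follow the braid-theoretic strategy of Theorem~\ref{T1}: realize both knots as braid closures in $B_q$ and prove the braids conjugate. Set $\delta=\sigma_{q-1}\sigma_{q-2}\cdots\sigma_1$, let $\Delta$ be the half-twist on all $q$ strands, and let $\Delta'$ be the half-twist on the first $q-1$ strands. Then $K_1=T\lp kq+1,q\rp$ is the closure of $\delta^{kq+1}$, and---using Dean's isotopy $K\lp p,q,r,-1\rp\simeq K\lp q,p,r,-1\rp$, valid for $r<p,q$---the knot $K_2$ is the closure of $\delta^{\lp k+1\rp q-1}\lp\Delta'\rp^{-2}$, where $\lp\Delta'\rp^{-2}$ is the $n=-1$ full twist on the first $q-1$ strands (that is, $\lp\sigma_1^{-1}\sigma_2^{-1}\cdots\sigma_{q-2}^{-1}\rp^{q-1}$ in the paper's normalization). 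The key identity I would establish, using the commutation Lemmas~\ref{L1} and~\ref{L3} together with the standard facts $\delta^q=\Delta^2\in Z\lp B_q\rp$, $\Delta=\Delta'\delta$, and $\Delta\sigma_i\Delta^{-1}=\sigma_{q-i}$, is
\[
\delta^{q-1}\lp\Delta'\rp^{-2}=\sigma_1\sigma_2\cdots\sigma_{q-1}.
\]

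Granting this identity, and using that $\lp k+1\rp q-1=kq+\lp q-1\rp$ and that $\delta^{kq}=\Delta^{2k}$ lies in the center, the braid for $K_2$ simplifies to $\Delta^{2k}\lp\sigma_1\sigma_2\cdots\sigma_{q-1}\rp$, while the braid for $K_1$ is $\delta^{kq+1}=\Delta^{2k}\delta$. Since $\Delta$ conjugates $\sigma_1\sigma_2\cdots\sigma_{q-1}$ to $\delta$ and $\Delta^{2k}$ is central, $\Delta^{2k}\delta=\Delta\lp\Delta^{2k}\sigma_1\sigma_2\cdots\sigma_{q-1}\rp\Delta^{-1}$, so the two braids are conjugate (by $\Delta$), and therefore $K_1$ and $K_2$ are isotopic as knots in $S^3$.

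The one nontrivial point is the displayed braid identity; once it is in hand, the rest is formal. Its proof is a short telescoping manipulation of the kind carried out in Lemmas~\ref{L5}--\ref{L8}, but care is needed with the direction of the commutation relations, with which strands each half-twist acts on, and with the off-by-one between the paper's $\Delta_s$ and the half-twists $\Delta,\Delta'$ used above; one must also pin down Dean's $p\leftrightarrow q$ normalization consistently for both braids. If the conjugator $\Delta$ turns out to be awkward to justify directly, the fallback is to imitate the proof of Proposition~\ref{P1}: pass to $B_q/Z\lp B_q\rp$ so that the claim reduces to the conjugacy of $\delta$ with $\delta^{q-1}\lp\Delta'\rp^{-2}$, and grind it out with the commutation lemmas.
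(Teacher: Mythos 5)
Your proposal is correct, and for the isotopy part it takes a genuinely different route from the paper. The paper proves that $K_1$ and $K_2$ are isotopic diagrammatically: it writes each knot as the indicated surgery on a link (Figures \ref{torusknot}--\ref{K2c}), manipulates the pictures, and concludes that the two braids are ``conjugate by a half twist.'' You replace this with a purely algebraic computation in $B_q$, in the spirit of Proposition \ref{P1}: writing the $K_2$ braid as $\delta^{kq}\,\delta^{q-1}(\Delta')^{-2}$ and reducing everything to the identity $\delta^{q-1}(\Delta')^{-2}=\sigma_1\sigma_2\cdots\sigma_{q-1}$. That identity is not only true but essentially immediate from the facts you list: since $\Delta=\Delta'\delta$, one has $(\Delta')^{-2}=\delta\Delta^{-1}\delta\Delta^{-1}=\delta\,(\Delta^{-1}\delta\Delta)\,\Delta^{-2}=\delta\,(\sigma_1\cdots\sigma_{q-1})\,\Delta^{-2}$, so $\delta^{q-1}(\Delta')^{-2}=\delta^{q}(\sigma_1\cdots\sigma_{q-1})\Delta^{-2}=\sigma_1\cdots\sigma_{q-1}$ by centrality of $\delta^q=\Delta^2$; so the point you flag as the only nontrivial step is even easier than the telescoping arguments of Lemmas \ref{L5}--\ref{L8}, and no fallback through $B_q/Z(B_q)$ is needed. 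Your explicit conjugator $\Delta$ is exactly the ``half twist'' the paper arrives at pictorially, so the two arguments agree in substance; yours buys a self-contained, checkable computation (and correctly handles Dean's $p\leftrightarrow q$ normalization with $r<p,q$), while the paper's buys geometric transparency via the surgery description. Your surface-slope computation $kq^2+q-1$ for both knots is the correct value (the paper's later mentions of $kq^2+q+1$ are typos), and your non-homeomorphism argument --- comparing the unordered pair of $2$-handle additions, two solid tori for $K_1$ versus a solid torus and $D^2(k,q-1)$ (not a solid torus since $k,q-1\ge 2$) for $K_2$ --- is the same as the paper's.
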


\begin{proof}
To show the knots are isotopic in $S^3$, we express $K_1$ as the indicated surgery on the link shown in Figure
\ref{torusknot}, where the $q$ vertical strands will be closed to form a knot,
and $K_2$ as the indicated surgery on the link shown in Figure \ref{K2a}. In
each figure, a box with either 1 or -1 appears, as in Figure \ref{boxexp}(a).
Figure \ref{boxexp}(b) shows what is meant if a 1 appears in this box; in the
braid, the right-most strand passes over all of the other strands once, in
order. If a -1 appears, the braid will be a reflection of Figure
\ref{boxexp}(b), i.e. the left-most strand will pass under all of the other
strands once, in order. We will show that the knots shown in Figures \ref{torusknot} and \ref{K2a} are in fact the same. 

\pagebreak 
\begin{figure}[h]
	\centering
	\includegraphics{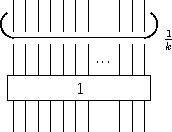}
	\caption{$T(p,q)$}\label{torusknot}
\end{figure}

\begin{figure}[ht]
\begin{center}
\includegraphics{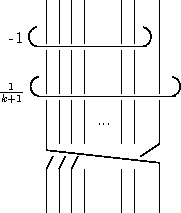}
\caption{$K((k+1)q-1, q, q-1, -1)$}\label{K2a}
\end{center}
\end{figure}

\begin{figure}[h]
 \begin{center}
\includegraphics[scale=.8]{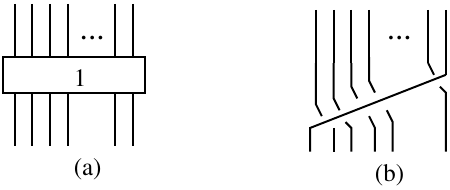} 
\caption{Twist box}\label{boxexp}
\end{center}
\end{figure}
\pagebreak

First reconsider the link for $K_2$ to be as shown in Figure \ref{K2b}, where
a component has been added, but the indicated surgeries will result in the
same knot. 

\begin{figure}[ht]
\begin{center}	
\includegraphics{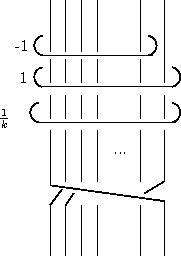}
\caption{$K((k+1)q-1, q, q-1, -1)$}\label{K2b}
\end{center}
\end{figure}

It is easy to see that the portion of $K_2$ shown in Figure \ref{K2part} can
be rewritten as the braid shown in Figure \ref{braidbits}, and  replacing that portion of $K_2$ appropriately, we obtain the link shown in
Figure \ref{K2c}, which is a reflection of $T(p,q)$, as shown in Figure
\ref{torusknot}.

\begin{figure}[ht]
 \begin{center}
\includegraphics[scale=.75]{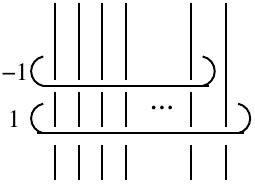} 
\caption{Part of $K_2$}\label{K2part}
\end{center}
\end{figure}

\begin{figure}[ht]
	\centering
	\includegraphics[scale=.7]{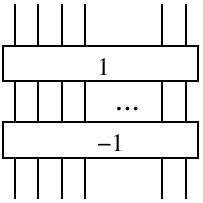}
	\caption{Part of $K_2$}\label{braidbits}
\end{figure}

\pagebreak

\begin{figure}[ht]
	\centering
	\includegraphics{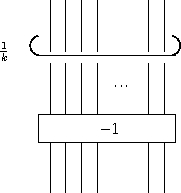}
	\caption{$K((k+1)q-1, q, q-1, -1)$}\label{K2c}
\end{figure}

Hence, as braids, the vertical strand portions of the links in Figures \ref{torusknot} and \ref{K2a} are conjugate by a half twist. As mentioned above, both of $K_1$ and $K_2$ have surface slope $kq^2 + q +1$. Then the indicated link surgeries represent the same knot, i.e. $K_1$  and $K_2$ are isotopic as knots in $S^3$. To show there is no homeomorphism of $S^3$ sending $(F, K_1)$ to $(F, K_2)$, we note that $K_2$ is $(k, q-1)$ Seifert with respect to $H$. Since $k\ge2$ and $q-1\ge2$, $H[K_2] \cong D^2(k, q-1)$. On the other hand, $K_1$ is primitive with respect to both $H$ and $H'$. Since a homeomorphism $h: S^3 \rightarrow S^3$ sending $(F, K_1)$ to $(F, K_2)$ will send $H$ to either $H$ or $H'$, no such homeomorphism can exist.
\begin{flushright}
$\square$
\end{flushright}
\end{proof}

\end{section}

\end{document}